\documentclass[titlepage,12pt]{article}
\usepackage{hyperref}
\usepackage{graphicx}
\usepackage[dvipsnames]{xcolor}
\usepackage{amssymb,amsthm,amsmath}
\usepackage[a4paper]{geometry}
\usepackage{datetime2}
\usepackage[utf8]{inputenc}
\usepackage[italian,english]{babel}

\date{}

\newtheorem{thm}{Theorem}[section]

\newtheorem{rmk}[thm]{Remark}
\newtheorem{prop}[thm]{Proposition}

\newtheorem{cor}[thm]{Corollary}
\newtheorem{ex}[thm]{Example}

\newcommand{\re}{\mathbb{R}}
\newcommand{\al}{a_{\lambda}}

\newcommand{\cl}{c_{\lambda}}
\newcommand{\ul}{u_{\lambda}}
\newcommand{\ssl}{s_{\lambda}}
\newcommand{\tl}{t_{\lambda}}

\newcommand{\Ek}{E_{\mathrm{kow}}}
\newcommand{\Eh}{E_{\mathrm{hyp}}}
\newcommand{\Et}{E_{\mathrm{Tar}}}
\newcommand{\G}{\mathcal{G}}
\newcommand{\Phih}{\Phi_{\mathrm{hyp}}}
\newcommand{\Phit}{\Phi_{\mathrm{Tar}}}
\newcommand{\ep}{\varepsilon}
\newcommand{\osc}{\operatorname{osc}}

\title{On the derivative loss for a class of abstract evolution equations with time dependent oscillating coefficients vanishing at initial time}

\author{Marina Ghisi\vspace{1ex}\\
{\normalsize Università degli Studi di Pisa} \\
{\normalsize Dipartimento di Matematica}\\
{\normalsize PISA (Italy)}\\
{\normalsize e-mail: \texttt{marina.ghisi@unipi.it}}
\and
Massimo Gobbino\vspace{1ex}\\
{\normalsize Università degli Studi di Pisa} \\
{\normalsize Dipartimento di Matematica}\\
{\normalsize PISA (Italy)}\\
{\normalsize e-mail: \texttt{massimo.gobbino@unipi.it}}
\and
Michael Reissig\vspace{1ex}\\
{\normalsize TU Bergakademie Freiberg}\\
{\normalsize Institute of Applied Analysis}\\
{\normalsize FREIBERG (Germany)}\\
{\normalsize e-mail: \texttt{reissig@math.tu-freiberg.de}}
}

\begin{document}

\maketitle

\begin{abstract}

We consider an abstract wave-type evolution equation with a propagation speed depending only on time. We assume that this propagation speed is the product of a monotone, but possibly degenerate, shape function and an oscillating factor bounded from above and from below by positive constants. We investigate how degeneracy and oscillations interact in determining the derivative loss of solutions.

After reducing the problem to a family of ordinary differential equations through spectral analysis, we derive frequency-dependent energy estimates. In the purely degenerate case, monotonicity of the shape function prevents excessive loss, and solutions lose at most one derivative. In the presence of oscillations, the resulting derivative loss is described by a minimization problem in which the degenerate and oscillatory effects genuinely interact.

The proof is based on a decomposition of the time interval into frequency-dependent regions, where different adapted energies are used.

\vspace{6ex}

\noindent{\bf Mathematics Subject Classification 2020 (MSC2020):}
Primary 35L80; Secondary 35L10, 35L90, 35B65, 35B40.


\vspace{6ex}

\noindent{\bf Key words:}
Degenerate hyperbolic equations; abstract wave equations; derivative loss; time-dependent propagation speeds; energy estimates.

\end{abstract}


\setcounter{equation}{0}
\section{Introduction}

In this paper we consider the abstract evolution equation
\begin{equation}
    u''(t)+\gamma(t)^2Au(t)=0,
    \label{eqn:basic}
\end{equation}
where $T$ is a positive real number, $\gamma:(0,T)\to\re$ is a given function, and $A$ is a nonnegative self-adjoint operator on a Hilbert space $H$. This equation is an abstract model of the wave equation
\begin{equation}
    u_{tt}(t,x)-\gamma(t)^2\Delta u(t,x)=0,
    \label{eqn:wave}
\end{equation}
where $\Delta$ is the Laplacian in the whole space $\re^d$, or in an open set $\Omega\subseteq\re^d$ with suitable boundary conditions, for example of Dirichlet type. In this physical model the time-dependent coefficient $\gamma(t)$ represents a time-dependent propagation speed.

When $\gamma$ is a function of bounded variation, bounded from below by a positive constant, then \eqref{eqn:basic} and \eqref{eqn:wave} are well behaved, in the sense that solutions exist in the classical energy spaces. When one or both of these conditions fail, solutions may exhibit the so-called \emph{derivative loss}, meaning that the space where the solution lies for $t>0$ is weaker than the space where the initial data are chosen. This phenomenon was discovered in the seminal papers \cite{1970-Oleijnik,1974-IvrijPetkov,1979-DGCS,1983-CJS}, and then investigated deeply in the literature.

In this paper we consider propagation speeds of the form
\begin{equation}
    \gamma(t)=b(t)c(t),
    \label{eqn:gamma=b*c}
\end{equation}
where $b:[0,T]\to[0,+\infty)$ is nondecreasing, but possibly vanishes at $t=0$, or even in an interval of the form $[0,\tau]$, while $c:[0,T]\to\re$ is bounded from above and from below by positive constants. In other words, $b$ is responsible for the degenerate character of the equation, whereas $c$ is responsible for the oscillatory behavior of $\gamma$. Our aim is to quantify how these two effects interact, and how their interaction determines the derivative loss of solutions.

\paragraph{\textmd{\textit{Reduction to a family of ODEs}}}

From the technical point of view, Fourier or spectral analysis reduces the problem to the study of the family of ordinary differential equations
\begin{equation}
    \ul''(t)+\lambda^2\gamma(t)^2\ul(t)=0.
    \label{eqn:ODE}
\end{equation}

This is the scalar equation satisfied by each Fourier component, or more generally by each component corresponding to the spectral decomposition of the operator $A$.

For every $\lambda\geq 0$ and every solution $\ul$ to \eqref{eqn:ODE}, we consider the classical energy
\begin{equation}
    E_\lambda(t):=\ul'(t)^2+\lambda^2 \ul(t)^2,
    \nonumber
\end{equation}
where the dependence on $\ul$ is omitted from the notation, and we introduce the quantity
\begin{equation}
    \G(\lambda,T):=\log\left(\sup\left\{
    E_\lambda(t):\ \ul\text{ solves \eqref{eqn:ODE}},\ E_\lambda(0)\leq 1,\ t\in[0,T]
    \right\}\strut\right).
    \nonumber
\end{equation}

By definition, $\G(\lambda,T)$ is the best exponent $C$ for which all solutions to \eqref{eqn:ODE} satisfy an estimate of the form
\begin{equation}
    \ul'(t)^2+\lambda^2\ul(t)^2
    \leq\exp(C)\left[\ul'(0)^2+\lambda^2\ul(0)^2\right]
    \qquad
    \forall t\in[0,T].
    \nonumber
\end{equation}

Classically, the behavior of $\G(\lambda,T)$ as $\lambda\to +\infty$ determines the derivative loss of solutions to \eqref{eqn:basic} or \eqref{eqn:wave} according to the following scheme:
\begin{align*}
    \G(\lambda,T)\text{ bounded}
    &\quad\leadsto\quad
    \text{no derivative loss},
    \\[0.5ex]
    \G(\lambda,T)\to +\infty\quad\text{and}\quad\frac{\G(\lambda,T)}{\log\lambda}\to 0
    &\quad\leadsto\quad
    \text{arbitrarily small derivative loss},
    \\[0.5ex]
    \G(\lambda,T)\simeq\log\lambda
    &\quad\leadsto\quad
    \text{finite derivative loss},
    \\[0.5ex]
    \frac{\G(\lambda,T)}{\log\lambda}\to +\infty
    &\quad\leadsto\quad
    \text{infinite derivative loss}.
\end{align*}

\paragraph{\textmd{\textit{The nondegenerate case}}}

The case where $b(t)\equiv 1$, namely the case of propagation speeds bounded from above and from below by positive constants, has been widely investigated in the literature. In this setting the main issue is not the degeneracy of the equation, but the possible oscillatory behavior of the coefficient, and different assumptions have been introduced in order to control such oscillations.

A first class of results concerns propagation speeds with a prescribed modulus of continuity. This point of view goes back to the approximated-energy method of Colombini, De Giorgi, and Spagnolo~\cite{1979-DGCS}. In modern terms, if $c$ has continuity modulus $\omega$, this method leads to an energy growth governed by the quantity
\begin{equation}
    \lambda\,\omega(1/\lambda).
    \nonumber
\end{equation}

A second class of results concerns coefficients which are regular in $(0,T]$, but whose derivatives may blow up as $t\to0^+$. More precisely, one assumes that
\[
    c\in C^0([0,T])\cap C^k((0,T])
\]
for some positive integer $k$, and measures the singular behavior of $c$ near the origin through suitable bounds on its derivatives. The first-order theory was developed, in this spirit, in~\cite{1980-Yagdjian,2002-SNS-ColDSaKin}. In the second-order theory the bound on $c''$ can be combined with the control of $c'$ in order to obtain logarithmic refinements of the derivative-loss estimates. This is the framework of~\cite{2003-BSM-ColDSaRei,2003-Hirosawa,2004-HirosawaReissig,2008-CicognaniHirosawaReissig}; see also~\cite{gg:CDSR-optimal}.

A third, more flexible, framework combines the previous two approaches, assuming at the same time a global continuity modulus for $c$ and estimates on its derivatives in $(0,T]$. This makes it possible to exploit both the global information provided by the modulus of continuity and the local information provided by derivative bounds \cite{2005-KinoshitaReissig,2007-DelSantoKinoshitaReissig}. A sharp formulation of this principle, in terms of an optimization problem which balances the two kinds of information, was obtained in~\cite{gg:OptDerLoss}.

Finally, we recall that the known thresholds in the nondegenerate theory are sharp in a strong sense: the pathological coefficients which realize the maximal derivative loss are not exceptional, but form residual sets in the appropriate functional classes~\cite{gg:residual}.

Our results can be viewed as a degenerate counterpart of this picture, where the usual assumptions on the oscillating coefficient are combined with a monotone factor that may vanish at the origin.

\paragraph{\textmd{\textit{The degenerate case}}}

The case where $c(t)\equiv 1$ and $b(0)=0$ was first investigated in~\cite{1983-CJS}. The result reveals a rather striking phenomenon. Without any monotonicity assumption on $b$, solutions may exhibit infinite derivative loss even when $b$ is of class $C^\infty$. The underlying mechanism can be described as follows. If, in a time interval $[t_1,t_2]$, the propagation speed oscillates between two values $b(t_1)$ and $b(t_2)$, then the energy of a solution may grow by a factor related to the ratio $b(t_2)/b(t_1)$. This ratio can be very large even when the absolute oscillation $b(t_2)-b(t_1)$ is small, provided that $b(t_1)$ is close to~$0$.

This heuristic suggests that monotonicity of the degenerate factor is not merely a technical assumption, but rather a structural condition which prevents this amplification mechanism. Degenerate coefficients with a prescribed structure, such as power-like degeneracies or more general time-dependent singularities and degeneracies, have been considered in several works; see for instance~\cite{1995-Tarama,1997-Yagdjian,Yamazaki1990}. This motivates the question of whether monotonicity alone is enough to control the derivative loss in the purely degenerate abstract equation.

\paragraph{\textmd{\textit{Our results}}}

In this paper we investigate how the possibly degenerate factor $b(t)$ interacts with the oscillations of $c(t)$ in determining the derivative loss of solutions. We begin with the purely degenerate case $c(t)\equiv 1$, considered in Theorem~\ref{thm:b-monotone}. In this case monotonicity of $b$ has a strong stabilizing effect on the derivative loss: regardless of how fast $b(t)$ vanishes at the origin, solutions lose at most one derivative. For special choices of $b(t)$, related phenomena were already considered in~\cite{1980-TaniguchiTozaki,1983-AlexandrianYagdjian}.

The situation changes in the first- and second-order theories, where $c$ is regular for positive times but may have a singular behavior at the origin. In these cases the derivative loss is described by minimization problems. In the first-order theory the minimum involves one frequency-dependent splitting point, while in the second-order theory it involves two such points. These minimization problems express the balance between the degeneracy of $b$ and the singular behavior of the derivatives of $c$. In particular, a sufficiently strong degeneracy of $b$ may compensate for singular oscillations of $c$ and reduce, or even eliminate, the derivative loss due to the oscillating factor. We refer to Theorems~\ref{thm:c0}, \ref{thm:c1}, and~\ref{thm:c2} for the precise statements.

\paragraph{\textmd{\textit{Overview of the technique}}}

The proofs are based on frequency-dependent energy estimates for the scalar equation \eqref{eqn:ODE}. The time interval is divided into regions where different energies are effective. Near the origin, where the propagation speed may be small, we use a Kowalevskian-type energy. Away from the degeneracy, hyperbolic energies become available. In the second-order theory a further Tarama-type energy is used in the region where the information on the second derivative of $c$ becomes relevant.

The transition points between these regions depend on the frequency $\lambda$. Optimizing their position leads precisely to the minimization problems that appear in the statements of the first- and second-order theories.

\paragraph{\textmd{\textit{Structure of the paper}}}

This paper is organized as follows. In Section~\ref{sec:statements} we state our main results, together with some applications to explicit propagation speeds. In Section~\ref{sec:energy} we prove the energy estimates that are the key tools in our analysis. These estimates involve three types of energies, which we call Kowalevskian, hyperbolic, and Tarama-type energies. Finally, in Section~\ref{sec:proofs} we combine the previous estimates in order to prove our main results.


\setcounter{equation}{0}
\section{Statements and examples}\label{sec:statements}

In this section we state the main results of this paper. We formulate them for solutions to the family of ordinary differential equations \eqref{eqn:ODE}, since the application to \eqref{eqn:basic} and \eqref{eqn:wave} is rather straightforward, as explained in the introduction. We start with the model case where the propagation speed contains only the monotone, possibly degenerate, factor, and no oscillatory term. Then we pass to the general case with oscillations.

\subsection{Degenerate case without oscillations}

We first consider the purely degenerate case
\begin{equation}
\ul''(t)+\lambda^{2}b(t)^{2}\ul(t)=0.
\label{eqn:main-ode}
\end{equation}
where $b$ is nondecreasing and nonnegative, but possibly vanishes at $t=0$, or even in a right neighborhood of $t=0$.

\begin{thm}[Degenerate monotone shape function]\label{thm:b-monotone}

Let $T$ be a positive real number, and let $b:[0,T]\to[0,+\infty)$ be a nondecreasing function. For every real number $\lambda\geq 0$ let us set
\begin{equation}
\tl:=\sup\{t\in[0,T]:\lambda tb(t)\leq 1\}.
\label{defn:tl}
\end{equation}

Then the following estimates hold true for every $\lambda\geq 0$.

\begin{enumerate}
\renewcommand{\labelenumi}{(\roman{enumi})}
\item \emph{(Estimate from above).} Every solution $\ul$ to (\ref{eqn:main-ode}) satisfies
\begin{equation}
\ul'(t)^{2}+\lambda^{2}\ul(t)^{2}\leq
3\exp(1)\cdot\left(1+b(T)^{2}\right)\cdot
\left[(1+\lambda^{2}t_{\lambda}^{2})\cdot\ul'(0)^{2}+\lambda^{2}\ul(0)^{2}\right]
\label{th:b(t)-above}
\end{equation}
for every $t\in[0,T]$.

\item \emph{(Optimality).} If $c_\lambda$ and $d_\lambda$ are two real numbers such that every solution $\ul$ to (\ref{eqn:main-ode}) satisfies the inequality
\begin{equation}
\ul'(t)^{2}+\lambda^{2}\ul(t)^{2}\leq
c_\lambda\cdot\ul'(0)^{2}+d_\lambda\cdot\lambda^{2}\ul(0)^{2}
\qquad
\forall t\in [0,T],
\label{hp:optimality}
\end{equation}
then necessarily
\begin{equation}
c_\lambda\geq \frac{1}{16}\lambda^{2}t_{\lambda}^{2}
\qquad
\forall\lambda\geq 0.
\label{th:b(t)-optimal}
\end{equation}

\end{enumerate}

\end{thm}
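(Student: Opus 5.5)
The plan is to split $[0,T]$ at $\tl$ and use a different energy on each side: a Kowalevskian-type energy on $[0,\tl]$, and a hyperbolic energy on $[\tl,T]$ that exploits monotonicity of $b$.

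\emph{Estimate on $[0,\tl]$.} I would use $\Ek(t):=\ul'(t)^2+\lambda^2\ul(t)^2$ and avoid Gronwall, which would give a factor $\exp(\lambda t)$. Instead I would integrate the equation directly. Since $b$ is nondecreasing, for $t\leq\tl$ we have $\lambda^2b(t)^2t^2\leq 1$. Writing $\ul(t)=\ul(0)+t\ul'(0)+\int_0^t(t-s)\ul''(s)\,ds$ with $\ul''=-\lambda^2b^2\ul$, I would set $M:=\max_{[0,t]}|\ul|$ and get $M\leq|\ul(0)|+t|\ul'(0)|+\tfrac12\lambda^2b(t)^2t^2M\leq |\ul(0)|+t|\ul'(0)|+\tfrac12 M$. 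Hence $M\leq 2(|\ul(0)|+t|\ul'(0)|)$, and similarly $|\ul'(t)|\leq|\ul'(0)|+\lambda^2b(t)^2tM$. Then $\lambda^2\ul(t)^2\lesssim\lambda^2\ul(0)^2+\lambda^2\tl^2\ul'(0)^2$. For the derivative, $\lambda^2b(t)^2tM\leq \lambda b(t)\cdot\lambda b(t)t M/t\cdots$; more simply, $\lambda^2 b^2 t\,|\ul(0)|\leq \lambda b(T)\,|\ul(0)|$ and $\lambda^2b^2t^2|\ul'(0)|\leq|\ul'(0)|$. This is where the factor $1+b(T)^2$ enters. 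Adjusting constants, for $t\le\tl$ I aim for
\[
E_\lambda(t)\leq C\,(1+b(T)^2)\left[(1+\lambda^2\tl^2)\ul'(0)^2+\lambda^2\ul(0)^2\right].
\]
The treatment of the degenerate case $b\equiv 0$ near $0$, where $\tl$ may equal $T$, needs care here.

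\emph{Estimate on $[\tl,T]$.} I would use $\Eh(t):=\ul'(t)^2+\lambda^2b(t)^2\ul(t)^2$. Then $\Eh'=2\lambda^2bb'\ul^2\geq0$, which is the wrong direction, so I would switch to $F:=\ul'^2/b^2+\lambda^2\ul^2$. Formally $F'=-2b'\ul'^2/b^3\leq0$ (in the measure sense for BV $b$, or after approximating $b$ by smooth nondecreasing functions). So $F$ is nonincreasing, and
\[
E_\lambda(t)\leq \max(1,b(T)^2)F(t)\leq(1+b(T)^2)F(\tl).
\]
It remains to bound $F(\tl)=\ul'(\tl)^2/b(\tl)^2+\lambda^2\ul(\tl)^2$. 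Since $\lambda\tl b(\tl)\approx1$, the first term is $\approx\lambda^2\tl^2\ul'(\tl)^2$. The bound from the first step gives $|\ul'(\tl)|\lesssim|\ul'(0)|+\lambda^2b^2\tl|\ul(0)|$, and $\lambda^2\tl^2\lambda^4b^4\tl^2\ul(0)^2\approx\lambda^2\ul(0)^2$. So $F(\tl)$ is controlled by $(1+\lambda^2\tl^2)\ul'(0)^2+\lambda^2\ul(0)^2$ without an extra $b(T)$ factor, and the factor $3e$ comes from tracking constants.

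The \textbf{main obstacle} is handling possible jumps of $b$ at $\tl$, where $\lambda\tl b(\tl)$ may be strictly less than or greater than $1$. I would use left and right limits, and run the first step up to $\tl^-$, where $\lambda tb(t)\le1$. The monotone energy then handles the jump, since $F$ decreases across an upward jump of $b$.

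\emph{Optimality.} I would take $\ul(0)=0$ and $\ul'(0)=1$ and evaluate at $t=\tl$ (or at points slightly below it). By the same integral identity, on $[0,\tl]$ we have $\ul(t)\geq t-\tfrac12\lambda^2b(t)^2t^2M$ with $M\leq 2t$, which is too weak to conclude. A sharper route is to compare with the case $b=0$: restricting to $t\le s<\tl$ with $\lambda s b(s)\le\tfrac12$, the error term is at most $\tfrac14 M$, giving $\ul(s)\geq s/2$ after bootstrapping $M\le\tfrac43 s$. Then $\lambda^2\ul(s)^2\geq\lambda^2s^2/4$. I would choose $s$ with $\lambda s b(s)\le\tfrac12$ and $s\ge\tl/2$; monotonicity gives $\lambda(\tl/2)b(\tl/2)\leq\tfrac12$, so the choice is valid for $\tl<T$. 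This yields $c_\lambda\geq\lambda^2\tl^2/16$.
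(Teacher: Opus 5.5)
Most of your plan is sound, and two of its three pieces coincide with the paper. Your monotone quantity $F(t)=\ul'(t)^2/b(t)^2+\lambda^2\ul(t)^2$ is exactly $\Eh/b^2$, so your step on $[\tl,T]$ is the paper's hyperbolic estimate $\Eh(t)\le (b(t)^2/b(\tl)^2)\,\Eh(\tl)$. Your optimality argument uses the paper's test solution, the same point $\tl/2$ and the same constant $1/16$; only the bootstrap differs, with a sup-norm absorption in place of the paper's continuity argument on $\ul$ and $\ul'$. It needs no restriction $\tl<T$, since $\lambda(\tl/2)b(\tl/2)\le\frac12$ always. Your Volterra bound on $[0,\tl]$ is also correct. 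It has the merit of involving only integrals of $b$ and the one-sided limits $b(\tl^\pm)$, so it handles a jump of $b$ at $\tl$ without the approximation of Section~\ref{sec:approx}.

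The gap is the constant $3\exp(1)$. It is part of the statement, and it does not come out of ``tracking constants'' in your bounds. Take $\ul(0)=0$, $b\equiv\beta\in(0,1)$, and $\lambda>1/(\beta T)$, so that $\lambda\tl\beta=1$. Your estimates $M\le 2\tl|\ul'(0)|$ and $|\ul'(\tl)|\le|\ul'(0)|+\lambda^2\beta^2\tl M\le 3|\ul'(0)|$ give only $F(\tl)\le 13\,\lambda^2\tl^2\ul'(0)^2$. For $t>\tl$, however, the statement requires the bound $3e(1+\beta^2)(1+\lambda^2\tl^2)\ul'(0)^2$, with $e=\exp(1)$, and this behaves like $3e\,\lambda^2\tl^2\ul'(0)^2$ as $\beta\to0^+$. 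Since $3e<13$, your argument proves the estimate only with a larger absolute constant.

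The missing idea is the one you dismissed: Gronwall produces a factor like $\exp(\lambda t)$ only for the unweighted energy. For $\Ek(t)=\ul'(t)^2+\lambda^2 b(\tl)^2\ul(t)^2$ one has $\Ek'(t)=2\lambda^2\ul(t)\ul'(t)\bigl(b(\tl)^2-b(t)^2\bigr)$. Hence $|\Ek'|\le\lambda b(\tl)\,\Ek$ on $[0,\tl]$, and $\Ek(t)\le\exp(\lambda\tl b(\tl))\,\Ek(0)\le e\,\Ek(0)$. When $\lambda\tl b(\tl)=1$ this gives directly $F(\tl)=\Ek(\tl)/b(\tl)^2\le e\,\bigl[\lambda^2\tl^2\ul'(0)^2+\lambda^2\ul(0)^2\bigr]$. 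Combined with $\ul(t)^2\le 2\ul(0)^2+2t\int_0^t\ul'(s)^2\,ds$ on $[0,\tl]$, it yields exactly the factor $3e(1+b(T)^2)$.
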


\begin{rmk}[On the mechanism and magnitude of derivative loss]
\begin{em}

We observe that, in this model case, the derivative loss of solutions depends only on the term $\lambda^2 \tl^2$ in estimate~(\ref{th:b(t)-above}), and hence, ultimately on $\tl$. We also observe that this term affects only the initial condition for the derivative. Since $\tl$ is clearly bounded from above by $T$, the maximal possible derivative loss is proportional to $\lambda^2$, which, for concrete wave-type equations, corresponds to the loss of one derivative. In addition, this derivative loss is \emph{instantaneous}: solutions immediately lose the required (even fractional) number of derivatives and subsequently remain in the same space.

\end{em}
\end{rmk}

The previous result is stated for the scalar equation \eqref{eqn:main-ode}, but it has an immediate consequence for the corresponding wave equation
\begin{equation}
u_{tt}(t,x)-b(t)^{2}\Delta u(t,x)=0.
\label{eqn:wave-b}
\end{equation}

Indeed, applying estimate~\eqref{th:b(t)-above} to each Fourier component gives the following statement, where the loss affects only the regularity of the initial velocity. For simplicity, we leave the space domain and the boundary conditions unspecified, since they play no role in the discussion below.

\begin{cor}[Wave equation with degenerate monotone propagation speed]\label{cor:b}

Let $T$ be a positive real number, and let $b:[0,T]\to[0,+\infty)$ be a nondecreasing function.

Then, for every real number $s\geq 0$, and for every pair of initial data in $H^{s+1}\times H^{s+1}$, the solution to \eqref{eqn:wave-b} lies in
\begin{equation}
C^{0}\left([0,T],H^{s+1}\right)\cap C^{1}\left([0,T],H^{s}\strut\right).
\label{space-u(t)}
\end{equation}

\end{cor}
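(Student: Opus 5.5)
The plan is to derive the corollary from Theorem~\ref{thm:b-monotone}(i), applied frequency by frequency. We expand the solution in Fourier modes, or in the spectral decomposition of $-\Delta$, writing $u(t)=\sum_k u_k(t)e_k$ (or the analogous direct integral). Here $e_k$ are eigenfunctions with eigenvalues $\lambda_k^2$, and each coefficient solves \eqref{eqn:main-ode} with $\lambda=\lambda_k$. Since $\tl\leq T$, estimate \eqref{th:b(t)-above} gives, uniformly in $t\in[0,T]$,
\begin{equation*}
u_k'(t)^2+\lambda_k^2u_k(t)^2\leq C_T\left[(1+\lambda_k^2)u_k'(0)^2+\lambda_k^2u_k(0)^2\right],
\qquad C_T:=3\exp(1)(1+b(T)^2)(1+T^2).
\end{equation*}
The constant does not depend on $k$.

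Next we multiply this inequality by $(1+\lambda_k^2)^{s}$ and sum over $k$. The left-hand side controls $\|u'(t)\|_{H^s}^2+\|u(t)\|_{\dot H^{s+1}}^2$. The right-hand side is bounded by $C_T(\|u'(0)\|_{H^{s+1}}^2+\|u(0)\|_{H^{s+1}}^2)$. To recover the low-frequency part of $\|u(t)\|_{H^{s+1}}$ (small $\lambda_k$, or $\lambda_k=0$), we write $u_k(t)=u_k(0)+\int_0^t u_k'$. This gives $|u_k(t)|^2\leq 2u_k(0)^2+2T^2\sup|u_k'|^2$, and the bound for $\sup|u_k'|^2$ comes from the display above. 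So $u\in L^\infty([0,T],H^{s+1})$ and $u'\in L^\infty([0,T],H^s)$, with norms controlled by those of the data.

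The only step with real content is continuity in time, as opposed to mere boundedness. Each mode $u_k$ is a $C^1$ function of $t$. The ODE has a nonnegative bounded coefficient $b^2$, and $b$ is monotone, hence measurable, so Carathéodory solutions exist and are unique. The partial sums are therefore continuous with values in $H^{s+1}$, and their derivatives are continuous with values in $H^s$. The uniform estimate shows that the tail $\sum_{k>N}(1+\lambda_k^2)^{s}\bigl[u_k'(t)^2+(1+\lambda_k^2)u_k(t)^2\bigr]$ is bounded, uniformly in $t$, by $C$ times the corresponding tail of the initial-data norms, and that tail tends to $0$. Uniform convergence of continuous functions then yields the membership \eqref{space-u(t)}. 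Uniqueness of the solution within this class follows from the same estimate applied to differences.
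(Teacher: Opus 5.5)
Your proposal is correct and takes the same route as the paper, which obtains the corollary by applying estimate \eqref{th:b(t)-above} to each Fourier (or spectral) component, with $\tl\leq T$ making the constant uniform in the frequency. You additionally supply the routine details the paper leaves implicit: the weighted summation, the bound on $u$ itself via $u_k(t)=u_k(0)+\int_0^t u_k'$, and continuity in time through uniform convergence of the frequency truncations.
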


In the sequel, we summarize the conclusion of Corollary~\ref{cor:b} by saying that solutions exhibit a derivative loss of the form
\begin{equation}
    H^{s+1}\times H^{s+1}
    \quad\leadsto\quad
    H^{s+1}\times H^{s}.
    \nonumber
\end{equation}
More refined conclusions can be obtained by looking at the actual size of $t_\lambda$. Indeed, estimate~\eqref{th:b(t)-above} shows that the relevant quantity is $\lambda^2t_\lambda^2$: the faster this quantity grows as $\lambda\to+\infty$, the larger the loss on the initial velocity.

The following examples illustrate this mechanism for some typical choices of the propagation speed.

\begin{ex}[Power-like propagation speed] \label{Examplepowerlike}
\begin{em}

Let us assume that $b(t)=t^{\ell}$ for some real number $\ell>0$. Then, for every $\lambda$ large enough, it turns out that $\tl=\lambda^{-1/(\ell+1)}$, and hence, \eqref{th:b(t)-above} reads as
\begin{equation}
\ul'(t)^{2}+\lambda^{2}\ul(t)^{2}\leq
3\exp(1)\left(1+T^{2\ell}\right)
\left[\left(1+\lambda^{2\ell/(\ell+1)}\right)\ul'(0)^{2}+\lambda^{2}\ul(0)^{2}\right].
    \nonumber
\end{equation}

In terms of wave equations, this means that solutions to~\eqref{eqn:wave-b} exhibit a derivative loss of the form
\begin{equation}
    H^{s+1}\times H^{s+\ell/(\ell+1)}
    \quad\leadsto\quad
    H^{s+1}\times H^{s},
    \nonumber
\end{equation}
and this amount of regularity on the initial velocity is optimal in view of the optimality statement in Theorem~\ref{thm:b-monotone}. As expected, the derivative loss $\ell/(\ell+1)$ increases with $\ell$, and approaches~0 as $\ell \to 0^+$, and~1 as $\ell \to +\infty$. In other words, stronger degeneracy of the propagation speed yields a larger derivative loss.

For related results on weakly hyperbolic equations with time-dependent singularity or degeneracy, including power-like models, we refer to~\cite{Yamazaki1990}.

\end{em}
\end{ex}

\begin{ex}[Propagation speeds with borderline vanishing orders]
\begin{em}

Let us assume that $b(t)\simeq|\log t|^{-1}$ as $t\to 0^+$. Then one can check that $\lambda^2 \tl^2\simeq\log^2\lambda$, which means that solutions exhibit an infinitesimally small derivative loss. In terms of the wave equation, this means that
\begin{equation}
    H^{s+1}\times (\log \langle D \rangle)^{-1} H^{s}
    \quad\leadsto\quad
    H^{s+1}\times H^{s}.
    \nonumber
\end{equation}

At the opposite end of the spectrum of possible choices, let us consider the case where $b(t)\simeq\exp(-1/t)$ as $t\to 0^+$. Then one can check that $\lambda^2 \tl^2\simeq\lambda^2(\log\lambda)^{-2}$, which means that the derivative loss is maximal, namely~1, up to a logarithmic factor. In terms of the wave equation, this means that
\begin{equation}
    H^{s+1}\times (\log \langle D \rangle) H^{s+1}
    \quad\leadsto\quad
    H^{s+1}\times H^{s}.
    \nonumber
\end{equation}

\end{em}
\end{ex}

\begin{ex}[Totally degenerate propagation speed]
\begin{em}

Let us consider the limiting case where $b(t)\equiv 0$ for every $t\in[0,T]$. Then, for every $\lambda \geq 0$, it turns out that $\tl = T$, and hence~\eqref{th:b(t)-above} reads as
\begin{equation}
\ul'(t)^{2}+\lambda^{2}\ul(t)^{2}\leq
3\exp(1)\cdot\left[(1+\lambda^{2}T^{2})\cdot\ul'(0)^{2}+\lambda^{2}\ul(0)^{2}\right].
    \nonumber
\end{equation}

The derivative loss associated with the term $\lambda^{2}T^{2}$ is the maximal possible one, and agrees with what is obtained from the explicit solution, namely $\ul(t)=\ul(0)+\ul'(0)\cdot t$.

\end{em}
\end{ex}


\subsection{Degenerate cases with controlled oscillations}

We now pass to the general case where the propagation speed has the form given in~(\ref{eqn:gamma=b*c}). Thus we consider the family of ordinary differential equations
\begin{equation}
\ul''(t)+\lambda^{2}b(t)^{2}c(t)^2\ul(t)=0.
\label{eqn:main-ode-c}
\end{equation}
As before, $b$ is monotone and possibly degenerate, while $c$ is bounded from above and from below by positive constants, but possibly oscillates.

\subsubsection{Zeroth-order control on the whole interval}

In the first result, we control the oscillations of $c$ on the whole interval by measuring its oscillation at each scale. When $c$ is continuous, this is the usual modulus-of-continuity framework; in general, however, no continuity assumption is needed.

\begin{thm}[$C^0$-theory] \label{thm:c0}

Let $T$, $\nu_1$, $\nu_2$ be three positive real numbers with $\nu_1\leq\nu_2$. Let $b:[0,T]\to[0,+\infty)$ be a nondecreasing function, and let $c:[0,T]\to\re$ be a measurable function that satisfies the strict hyperbolicity condition
\begin{equation}
    \nu_1\leq c(t)\leq\nu_2
    \qquad
    \forall t\in[0,T].
    \label{hp:c-sh}
\end{equation}

For every real number $s>0$ we consider the oscillation $\omega(s)$ of $c$ at scale $s$, defined as
\begin{equation}
    \omega(s):=\sup\left\{|c(t_2)-c(t_1)|:(t_1,t_2)\in[0,T]^2,\ |t_2-t_1|\leq s\right\}.
    \label{defn:c-omega}
\end{equation}

For every real number $\lambda> 0$, we define the numbers
\begin{equation}
\tl:=\sup\{t\in[0,T]:\nu_2\lambda tb(t)\leq 1\},
\label{defn:tl-nu2}
\end{equation}
and
\begin{equation}
    \Phi_0(\lambda,T):=\lambda\,\omega\left(\frac{1}{\lambda}\right)\cdot
    \left(\frac{2T}{\nu_1}+\frac{2\nu_2}{\nu_1}\int_0^T b(t)\,dt\right).
    \label{defn:Phi0}
\end{equation}

Then every solution $\ul$ to (\ref{eqn:main-ode-c}) satisfies
\begin{eqnarray}
    \ul'(t)^2+\lambda^2\ul(t)^2 & \leq &
    K_0(\nu_1,\nu_2)\cdot
    \exp(\Phi_0(\lambda,T))\cdot\left(1+\nu_2^2 b(T)^2\right)
    \nonumber
    \\[1ex]
    & &
    \cdot\left[(1+\lambda^2\tl^2)\cdot\ul'(0)^2+\lambda^2\ul(0)^2\right]
    \label{th:C0-theory}
\end{eqnarray}
for every $t\in [0,T]$, where
\begin{equation}
    K_0(\nu_1,\nu_2):=
    3\left(\frac{\nu_2}{\nu_1}\right)^2\exp\left(\frac{\nu_2}{\nu_1}\right).
    \label{defn:K0}
\end{equation}

\end{thm}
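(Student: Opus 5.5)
Split $[0,T]$ at the frequency-dependent point $\tl$ defined in \eqref{defn:tl-nu2}. On $[0,\tl]$ we use a Kowalevskian energy, exactly as in the proof of Theorem~\ref{thm:b-monotone}. On $[\tl,T]$ we use a hyperbolic energy with a regularized coefficient, in the spirit of Colombini--De Giorgi--Spagnolo.

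\textbf{Step 1: the interval $[0,\tl]$.} Let $\Ek(t):=\ul'(t)^2+\ul(t)^2/(\tl^2+\lambda^{-2})$, or a similar weighted version. On this interval $\lambda^2 b^2c^2\leq \nu_2^2\lambda^2 b(t)^2\leq 1/t^2$ by the definition of $\tl$. Differentiating and using Cauchy--Schwarz gives $\Ek'\leq C\,\Ek/(\tl+1/\lambda)$, so $\Ek$ grows at most by a fixed factor on $[0,\tl]$. Converting back to $E_\lambda$ at $t=0$ and $t=\tl$ yields the factor $(1+\lambda^2\tl^2)$ on $\ul'(0)^2$ only. This is the same computation as in Theorem~\ref{thm:b-monotone}, and the factor $\exp(\nu_2/\nu_1)$ in $K_0$ presumably arises here.

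\textbf{Step 2: the interval $[\tl,T]$.}
\begin{itemize}
\item[(a)] Mollify $c$ at scale $\ep=1/\lambda$ to get $c_\ep$. Then $\nu_1\leq c_\ep\leq\nu_2$, $|c_\ep-c|\leq\omega(1/\lambda)$ and $|c_\ep'|\leq 2\omega(1/\lambda)\lambda$. No continuity of $c$ is needed, since a local average against a probability kernel satisfies both bounds by the definition \eqref{defn:c-omega}.
\item[(b)] Take $\Eh(t):=\ul'^2+\lambda^2 b(t)^2c_\ep(t)^2\ul^2$. Since $b$ is only monotone, treat $b$ as a function of bounded variation, or approximate it by smooth nondecreasing functions, so that $b'\geq0$ is a measure.
\item[(c)] Differentiate:
\[
\Eh'=2\lambda^2 b^2(c_\ep^2-c^2)\ul\ul'+2\lambda^2 b^2c_\ep c_\ep'\ul^2+2\lambda^2 bb'c_\ep^2\ul^2 .
\]
\item[(d)] The $b'$ term is nonnegative, but it is controlled by $2(b'/b)\Eh$. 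Gronwall then gives the factor $(b(T)/b(\tl))^2$, which is too large as it stands. Instead, rewrite it as an exact derivative: with $F:=\Eh/(b c_\ep)$ the $b'$ contribution cancels up to terms involving $\ul'^2\, b'/b$. So I would rather use $\Eh$ divided by $\lambda b c_\ep$, the adiabatic invariant, where the monotone part produces a sign-definite helpful term.
\item[(e)] The remaining terms are bounded by $\lambda b\,\omega(1/\lambda)\cdot(\text{const}/\nu_1)\cdot F$ and $\omega(1/\lambda)\lambda\,(2/\nu_1)\,F$. Integrating over $[\tl,T]$ gives exactly
\[
\Phi_0=\lambda\,\omega(1/\lambda)\left(\frac{2T}{\nu_1}+\frac{2\nu_2}{\nu_1}\int_0^T b\right).
\]
\end{itemize}

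\textbf{Step 3: conversion back to $E_\lambda$.} At $t$ and at $\tl$ the comparison between $E_\lambda$ and $\lambda b c_\ep F$ costs factors $(\nu_2/\nu_1)^2$, $1+\nu_2^2b(T)^2$, and $1/(\lambda b(\tl))$. At $\tl$ we have $\nu_2\lambda\tl b(\tl)=1$ (up to taking limits). The lower bound on $\lambda b(\tl)$ is then exactly where the Kowalevskian gain from Step 1 must compensate. This matching at $\tl$, with care for the case where $b$ vanishes on an interval or jumps at $\tl$, is where the constants must fit.

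\textbf{Main obstacle.} The main difficulty is the hyperbolic estimate with a merely monotone, possibly vanishing $b$. One must choose the normalization of the energy so that the $b'$ term has the right sign while the lower-order terms still produce only $\lambda\omega(1/\lambda)(1+b)$. I would first try the energy $\ul'^2/(\lambda b c_\ep)+\lambda b c_\ep\ul^2$. For this energy the derivative of $\log(bc_\ep)$ appears with a coefficient $(\lambda b c_\ep\ul^2-\ul'^2/(\lambda bc_\ep))$ that is not sign-definite. The approach would then be to exploit monotonicity via an averaging argument. This is the delicate point of the proof.
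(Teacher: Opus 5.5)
Your architecture is the right one: a Kowalevskian energy up to $\tl$, then a hyperbolic energy with $c$ regularized at scale $1/\lambda$. The argument stalls at Step~2(d), however, and what you put in its place is not a proof. The Gronwall factor $b(t)^2/b(\tl)^2$ that you reject is not too large. It is harmless, and it is exactly what produces the loss $\lambda^2\tl^2$. Since $\Eh(t)\geq\frac{b(t)^2c_\ep(t)^2}{1+b(t)^2c_\ep(t)^2}\bigl(\ul'(t)^2+\lambda^2\ul(t)^2\bigr)$, the factor $b(t)^2$ cancels and you get
\[
\ul'(t)^2+\lambda^2\ul(t)^2\leq e^{\Phi_0(\lambda,T)}\,\frac{1+\nu_2^2b(T)^2}{\nu_1^2}\cdot\frac{\Eh(\tl)}{b(\tl)^2}
\qquad\forall t\in[\tl,T].
\]
Now choose the energy on $[0,\tl]$ as $\Ek:=\ul'^2+\lambda^2b(\tl)^2d_0^2\ul^2$ with $d_0:=c_\ep(\tl)$. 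It coincides with $\Eh$ at $\tl$, so $\Eh(\tl)\leq e^{\nu_2/\nu_1}\bigl[\ul'(0)^2+\lambda^2b(\tl)^2d_0^2\ul(0)^2\bigr]$. After dividing by $b(\tl)^2$, the weight cancels on $\ul(0)^2$, while $\ul'(0)^2$ acquires $1/b(\tl)^2=\nu_2^2\lambda^2\tl^2$. Nothing has to be ``compensated'' at $\tl$: this factor is precisely the $\lambda^2\tl^2$ in the statement.

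Your substitute energy $\ul'^2/(\lambda bc_\ep)+\lambda bc_\ep\ul^2$ is left with a coefficient you yourself call non-sign-definite, plus an unspecified ``averaging argument''. Bounding that term in absolute value simply returns a factor $b(t)/b(\tl)$, which is the same structure you discarded, so monotonicity buys no extra sign. As written, Steps~2(d)--(e) and~3 are therefore not actually carried out. The fix is to keep the estimate you rejected and do this bookkeeping. On $[0,\tl]$ itself you then recover $\lambda^2\ul^2$ from $\ul(t)^2\leq 2\ul(0)^2+2t^2\sup_{[0,t]}\ul'^2$ and $\lambda\tl\leq 1/(\nu_2 b(\tl))$.

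Some details also matter for the stated constants.
\begin{itemize}
\item Use the one-sided average $\cl(t)=\lambda\int_t^{t+1/\lambda}c(s)\,ds$. It gives $|\cl'(t)|=\lambda|c(t+1/\lambda)-c(t)|\leq\lambda\,\omega(1/\lambda)$, whereas your bound $2\lambda\,\omega(1/\lambda)$ would double the $T$-term in $\Phi_0$.
\item In Step~1, the bound $\lambda^2b(t)^2c(t)^2\leq 1/t^2$ is not integrable at $0$. You need $\lambda^2b(t)^2c(t)^2\leq\nu_2^2\lambda^2b(\tl)^2$, which comes from the monotonicity of $b$.
\item With the weight $\lambda^2b(\tl)^2d_0^2$ one gets $\Ek'\leq\frac{\nu_2^2}{\nu_1}\lambda b(\tl)\Ek$. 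Combined with $\nu_2\lambda\tl b(\tl)\leq 1$, this yields exactly $e^{\nu_2/\nu_1}$. Your weight $1/(\tl^2+\lambda^{-2})$ instead naturally yields a factor of order $\exp\bigl(\sqrt{1+\nu_2^2b(T)^2}\bigr)$, and it does not coincide with $\Eh(\tl)$.
\end{itemize}
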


Let us comment on the assumptions and conclusions of Theorem~\ref{thm:c0}.

\begin{rmk}[On the oscillation function]
\begin{em}

The function $\omega$ appearing in Theorem~\ref{thm:c0} is defined directly from $c$ as its oscillation at scale~$s$. Thus no additional assumption on $c$ is hidden in the notation. When $c$ is continuous, then $\omega$ is the usual modulus of continuity. In general, however, $\omega(s)$ need not tend to~$0$ as $s\to0^+$.

At the opposite extreme, one always has
\[
    \omega(s)\leq \nu_2-\nu_1
    \qquad
    \forall s>0,
\]
by the strict hyperbolicity condition~(\ref{hp:c-sh}). Therefore, the result applies also to merely measurable propagation speeds. In this case the exponent $\Phi_0(\lambda,T)$ is at most proportional to~$\lambda$, in agreement with the classical theory, where one has to require analytic regularity of the initial data in the absence of any effective control on the oscillations.

\end{em}
\end{rmk}

\begin{rmk}[On the derivative loss]
\begin{em}

Let us comment on the different terms that appear in~(\ref{th:C0-theory}).

\begin{itemize}

\item If the function $c$ is constant, then $\omega(s)\equiv0$ and $\nu_1=\nu_2$, and~(\ref{th:C0-theory}) reduces to the estimate of Theorem~\ref{thm:b-monotone}.

\item The factors $K_0(\nu_1,\nu_2)$ and $1+\nu_2^2 b(T)^2$ do not depend on $\lambda$, and hence, do not contribute to the derivative loss.

\item The derivative loss is the sum of two contributions. The contribution due to the degeneracy of $b(t)$ is represented by the term $\lambda^2\tl^2$ in front of the initial velocity. The contribution due to the oscillations of $c(t)$ is represented by the exponential factor $\exp(\Phi_0(\lambda,T))$, whose exponent is proportional to $\lambda\,\omega(1/\lambda)$.

\item In this zeroth-order theory there is no genuine interaction between $b(t)$ and $c(t)$. Their contributions simply multiply at the level of the estimate, and therefore, add up at the level of derivative loss. Both contributions are separately known to be optimal.

\item In contrast with the contribution of $b(t)$, the derivative loss due to $c(t)$ is progressive in time. Indeed, in~(\ref{defn:Phi0}) the coefficient of $\lambda\,\omega(1/\lambda)$ increases with $T$, consistently with the fact that the possible oscillations of $c$ may accumulate their effect along the whole interval.

\item The derivative loss due to $b(t)$ affects only the regularity required on the initial velocity $\ul'(0)$, while the derivative loss due to $c(t)$ affects both initial data $\ul'(0)$ and $\ul(0)$.

\end{itemize}

\end{em}
\end{rmk}

\begin{rmk}[Finite derivative loss]
\begin{em}

If we are interested in solutions with at most finite derivative loss, then
$\Phi_0(\lambda,T)$ has to be bounded by a multiple of $\log\lambda$ as
$\lambda\to+\infty$. Since the only $\lambda$-dependent term in
$\Phi_0(\lambda,T)$ is $\lambda\,\omega(1/\lambda)$, this requires
\[
    \omega(\sigma)\leq C\sigma|\log\sigma|
\]
for $\sigma$ small enough. In
particular, $\omega(\sigma)\to0$ as $\sigma\to0^+$, and therefore $c$ is
continuous.

Thus, in the $C^0$-theory, genuinely discontinuous coefficients are
allowed, but they lead in general to infinite derivative loss. Finite derivative
loss corresponds precisely to the classical log-Lipschitz scale, which is the
same threshold as in the nondegenerate theory.

\end{em}
\end{rmk}


\subsubsection{First-order control near the origin}

We now consider an oscillating factor $c(t)$ which is of class $C^1$ for positive times, but may have a singular behavior as $t\to0^+$. The behavior near the origin is measured by the oscillation of $c$ in the interval $(0,s]$, while the behavior away from the origin is measured by the total variation of $c$ in $(s,T)$. The splitting point $s$ is then chosen in a frequency-dependent way.

\begin{thm}[$C^1$-theory] \label{thm:c1}

Let $T$, $\nu_1$, $\nu_2$ be three positive real numbers with $\nu_1\leq\nu_2$. Let $b:[0,T]\to[0,+\infty)$ be a nondecreasing function, and let $c\in C^1((0,T])$ be a function satisfying the strict hyperbolicity condition
\begin{equation}
    \nu_1\leq c(t)\leq\nu_2
    \qquad
    \forall t\in(0,T].
    \label{hp:c1-c-sh}
\end{equation}

For every $s\in(0,T]$ we consider the oscillation of $c$ in $(0,s]$, defined as
\begin{equation}
    \osc(c,(0,s]):=
    \sup\left\{|c(t_2)-c(t_1)|:(t_1,t_2)\in(0,s]^2\right\}.
    \label{defn:osc-0s}
\end{equation}

For every real number $\lambda\geq 0$, we define $\tl$ as in~(\ref{defn:tl-nu2}), the function $\psi_{1,\lambda}:[\tl,T]\to\re$ by
\begin{equation}
    \psi_{1,\lambda}(s):=
    \frac{2\nu_2}{\nu_1}\lambda\cdot\osc(c,(0,s])\int_{\tl}^{s}b(t)\,dt+
    \frac{2}{\nu_1}\int_s^T|c'(t)|\,dt,
    \label{defn:psi-1}
\end{equation}
and the number
\begin{equation}
    \Phi_1(\lambda,T):=\min\left\{\psi_{1,\lambda}(s):s\in[\tl,T]\right\}.
    \label{defn:Phi1}
\end{equation}

Then every solution $\ul$ to~(\ref{eqn:main-ode-c}) satisfies
\begin{eqnarray}
    \ul'(t)^2+\lambda^2\ul(t)^2 & \leq &
    K_0(\nu_1,\nu_2)\cdot\exp(\Phi_1(\lambda,T))\cdot\left(1+\nu_2^2b(T)^2\right)
    \nonumber
    \\[0.5ex]
    & &
    \cdot\left[(1+\lambda^2\tl^2)\cdot\ul'(0)^2+\lambda^2\ul(0)^2\right]
    \label{th:C1-theory}
\end{eqnarray}
for every $t\in [0,T]$, where $K_0(\nu_1,\nu_2)$ is defined by~(\ref{defn:K0}).

\end{thm}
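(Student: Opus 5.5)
We fix $\lambda>0$ and $s\in[\tl,T]$, prove~(\ref{th:C1-theory}) with $\Phi_1$ replaced by $\psi_{1,\lambda}(s)$, and then take the minimum over $s$. The interval $[0,T]$ is split into three regions: $[0,\tl]$, $[\tl,s]$, and $[s,T]$.

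\textbf{Region $[0,\tl]$.} Here we use the Kowalevskian energy $\Ek(t):=\ul'(t)^2+\lambda^2\ul(t)^2$, exactly as in the proof of Theorem~\ref{thm:b-monotone}, with $b$ replaced by $\nu_2 b$. Differentiating gives
\[
\Ek'=2\lambda^2(1-b^2c^2)\ul\ul'\leq 2\lambda(1+\nu_2^2b(t)^2)\,|\lambda\ul||\ul'| .
\]
Instead of a crude Gronwall argument we integrate the equation directly: $|\ul'(t)|\leq|\ul'(0)|+\lambda^2\nu_2^2\int_0^t b^2|\ul|$ and $|\ul(t)|\leq|\ul(0)|+t\sup|\ul'|$. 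Since $\nu_2\lambda t b(t)\leq1$ for $t\leq\tl$, we get $\lambda^2\nu_2^2 t\,b(t)^2\leq b(t)/t$, and monotonicity of $b$ controls $\int_0^{\tl}\lambda^2\nu_2^2 b^2\,t\,dt\leq \nu_2\lambda\tl b(\tl)\leq1$. This yields
\[
\ul'(\tl)^2+\lambda^2\ul(\tl)^2\leq C\left[(1+\lambda^2\tl^2)\ul'(0)^2+\lambda^2\ul(0)^2\right].
\]
Bounding $\ul'(t)^2+\lambda^2\ul(t)^2$ at intermediate times $t\leq\tl$ works the same way.

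\textbf{Region $[\tl,s]$.} Here we use a hyperbolic energy with the frozen constant $c_s:=c(s)$:
\[
\Eh(t):=\ul'(t)^2+\lambda^2b(t)^2c_s^2\ul(t)^2 .
\]
Its derivative is
\[
\Eh'=2\lambda^2bb'c_s^2\ul^2+2\lambda^2b^2(c_s^2-c^2)\ul\ul' .
\]
The first term is nonnegative, but divided by $\Eh$ it contributes at most $2b'/b$, which only produces the factor $b(s)^2/b(\tl)^2$. The trick, as in Theorem~\ref{thm:b-monotone}, is to rescale the energy as $\Eh/b$ (or use $\ul'^2/b+\lambda^2 b c_s^2\ul^2$), so that the $b'$ terms carry a good sign and are discarded. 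The second term is bounded by $2\lambda b\,|c_s^2-c^2|/c_s\cdot\Eh \leq (2\nu_2/\nu_1)\lambda b\,\osc(c,(0,s])\,\Eh$. Gronwall then gives the factor $\exp\bigl(\tfrac{2\nu_2}{\nu_1}\lambda\osc(c,(0,s])\int_{\tl}^s b\bigr)$. At $t=\tl$ we have $\lambda b(\tl)\approx 1/(\nu_2\tl)$, so converting between $\Eh$ and the Kowalevskian energy produces factors comparable to $(1+\lambda^2\tl^2)$ and $\nu_2^2/\nu_1^2$. These must be absorbed into $K_0$ and into $(1+\nu_2^2b(T)^2)$.

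\textbf{Region $[s,T]$.} Here we use the variable-coefficient energy
\[
\ul'^2+\lambda^2b^2c^2\ul^2 ,
\]
again suitably divided by $bc$. Its derivative produces terms in $b'$, which have a good sign after rescaling, and terms in $c'$, bounded by $2|c'|/c\leq 2|c'|/\nu_1$ times the energy. This gives the factor $\exp\bigl(\tfrac{2}{\nu_1}\int_s^T|c'|\bigr)$. Gluing the three regions at $\tl$ and $s$ introduces the ratios $c(s)^2/c(t)^2\leq(\nu_2/\nu_1)^2$ and finally converts back to $\ul'^2+\lambda^2\ul^2$, which costs the factor $1+\nu_2^2b(T)^2$. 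The factor $\exp(\nu_2/\nu_1)$ in $K_0$ should come from the Kowalevskian region.

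\textbf{Main obstacle.} The delicate step is the handling of the degeneracy of $b$ in the hyperbolic regions. Comparing $\Eh$ with $\ul'^2+\lambda^2\ul^2$ when $\lambda b$ may be small seems to cost powers of $b(t)/b(\tl)$ that are not present in the statement. I would first try the rescaled energy $\ul'^2/(bc)+\lambda^2 bc\,\ul^2$, whose $b'$ contribution is exactly $-(b'/b)\bigl(\ul'^2/(bc)-\lambda^2bc\,\ul^2\bigr)$. That expression is not signed, so some additional care, possibly an adiabatic-type argument exploiting monotonicity, will be needed to close the estimate with the constants claimed.
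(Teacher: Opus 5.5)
Your three regions (Kowalevskian on $[0,\tl]$, hyperbolic with the frozen value $c(s)$ on $[\tl,s]$, hyperbolic with $c$ on $[s,T]$) and the two exponents are exactly the paper's. But the proposal does not close, and the fix you suggest for your ``main obstacle'' does not work. The rescaled energy $\Eh/b$ carries the unsigned term $-\frac{b'}{b}\bigl(\ul'^2/b-\lambda^2bc_s^2\ul^2\bigr)$, as you note yourself. Nothing of this kind is used for Theorem~\ref{thm:b-monotone}, which keeps the plain hyperbolic energy.

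The missing observation is that the factor $b(t)^2/b(\tl)^2$ coming from $2b'/b$ is not a loss: it cancels against the cost of converting back to the standard energy. Take $d=c(s)$ on $[\tl,s]$ and $d=c$ on $[s,T]$, which is continuous at $s$, so the two estimates chain with no extra ratio. Set $\Eh(t):=\ul'(t)^2+\lambda^2b(t)^2d(t)^2\ul(t)^2$. For $t\geq\tl$ this gives
\[
\ul'(t)^2+\lambda^2\ul(t)^2\leq\frac{1+b(t)^2d(t)^2}{b(t)^2d(t)^2}\,\Eh(t)\leq e^{\psi_{1,\lambda}(s)}\,\frac{1+b(t)^2d(t)^2}{d(t)^2}\cdot\frac{\Eh(\tl)}{b(\tl)^2},
\qquad
\frac{\Eh(\tl)}{b(\tl)^2}=\nu_2^2\lambda^2\tl^2\,\ul'(\tl)^2+\lambda^2c(s)^2\ul(\tl)^2 .
\]
The last identity uses $\nu_2\lambda\tl b(\tl)=1$ when $\tl<T$. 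This relies on $b$ being continuous, and also $C^1$ to differentiate $\Eh$. For a merely monotone $b$ you therefore need a reduction step, which the paper does by mollifying $b$ from above and passing to the limit in $\tl$ and in the minimum.

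The display also shows what the first region must deliver: a bound on $\ul'(\tl)^2$ with \emph{no} loss, in terms of $\ul'(0)^2+\lambda^2b(\tl)^2\ul(0)^2$. Your summary bound $\ul'(\tl)^2+\lambda^2\ul(\tl)^2\leq C[(1+\lambda^2\tl^2)\ul'(0)^2+\lambda^2\ul(0)^2]$ is too weak. Inserted above, it puts $\lambda^4\tl^4$ in front of $\ul'(0)^2$, and your remark that the conversion at $\tl$ costs a further factor comparable to $1+\lambda^2\tl^2$ counts the loss twice.

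The paper avoids this by using on $[0,\tl]$ the constant-coefficient energy $\ul'^2+\lambda^2b(\tl)^2c(s)^2\ul^2$, which coincides with $\Eh$ at $\tl$. Its derivative is at most $\frac{\nu_2^2}{\nu_1}\lambda b(\tl)$ times itself, so it grows at most by $\exp(\nu_2^2\lambda\tl b(\tl)/\nu_1)\leq\exp(\nu_2/\nu_1)$. Dividing its initial value by $b(\tl)^2$ gives $\nu_2^2\lambda^2\tl^2\ul'(0)^2+\lambda^2c(s)^2\ul(0)^2$. So the loss appears once, only on $\ul'(0)$, and the constants $(\nu_2/\nu_1)^2\exp(\nu_2/\nu_1)$ of $K_0$ come out directly.

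Your direct integration in fact yields $\sup_{[0,\tl]}|\ul'|\leq2|\ul'(0)|+2\nu_2\lambda b(\tl)|\ul(0)|$. That would also suffice up to constants, but you discard it, and it would not give the stated $K_0$. For $t\leq\tl$ itself, integrating the bound on $\ul'$ is indeed where $1+\lambda^2\tl^2$ enters, as you say.
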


Let us comment on the assumptions and conclusions of Theorem~\ref{thm:c1}.

\begin{rmk}[Local oscillation vs global modulus-type control]\label{rmk:c1-omega}
\begin{em}

In Theorem~\ref{thm:c1}, the behavior of $c$ near the origin is measured by the actual local oscillation $\osc(c,(0,s])$. If one assumes instead a global modulus-type control on the whole interval, then the same argument gives an analogous estimate in which the first term in~(\ref{defn:psi-1}) contains $\omega(1/\lambda)$ in place of $\osc(c,(0,s])$. This is closer to the classical nondegenerate formulation, but may be less adapted to the actual behavior of $c$ near the origin.

In this sense, the formulation of Theorem~\ref{thm:c1} is more intrinsic: it does not require choosing an a priori modulus of continuity, and it exploits only the oscillation that is really seen before the splitting point.

\end{em}
\end{rmk}

\begin{rmk}[Genesis of the derivative loss]\label{rmk:c1-der-loss}
\begin{em}

As in the $C^0$ theory, the contributions of $b(t)$ and $c(t)$ to the derivative loss are represented by the terms $\lambda^2 \tl^2$ and $\exp\bigl(\Phi_1(\lambda,T)\bigr)$, respectively. The new feature is that the contribution of $c(t)$ is no longer determined by a single explicit quantity, but by the minimization problem in~(\ref{defn:Phi1}).

This minimization reflects a compromise between two opposite requirements. On the one hand, the dangerous factor $\lambda$ in the first term of $\psi_{1,\lambda}$ can be compensated for by choosing $s$ close to $\tl$, where the oscillation of $c$ in $(0,s]$ is measured only near the origin and the integral of $b$ from $\tl$ to $s$ is small. On the other hand, if $s$ is too small, the integral of $|c'|$ in the second term of $\psi_{1,\lambda}$ may become too large. The quantity $\Phi_1(\lambda,T)$ is obtained by choosing the best frequency-dependent splitting point $\ssl\in[\tl,T]$ between these two regimes.

This is the first place where the degenerate factor $b$ and the oscillating factor $c$ genuinely interact: the optimal choice of $s$ depends at the same time on the growth of $b$, on the oscillation of $c$ near the origin, and on the singular behavior of $c'$ away from the origin.

\end{em}
\end{rmk}

The next corollary identifies a natural scale for the balance between the oscillations of $c$ and the growth of $|c'|$. It shows that, if $|c'|$ is controlled by the scale $b/B$, where $B$ is a primitive of $b$, then the contribution of the oscillating factor is at most logarithmic in the frequency, even without any smallness assumption on the oscillation of $c$ near the origin.

\begin{cor}[A natural scale]\label{cor:natural-scale-c1}

Let $b$ and $c$ be as in Theorem~\ref{thm:c1}, and let us assume in addition that $b(t)>0$ for every $t\in(0,T]$. We set
\begin{equation}
    B(t):=\int_0^t b(\tau)\,d\tau
    \qquad
    \forall t\in[0,T],
    \label{defn:B}
\end{equation}
and we suppose that there exists a nonnegative real number $M$ such that
\begin{equation}
    |c'(t)|\leq M\frac{b(t)}{B(t)}
    \qquad
    \forall t\in(0,T].
    \nonumber
\end{equation}

    Then one has
    \begin{equation}
        \Phi_1(\lambda,T)\leq
        \frac{2M}{\nu_1}\log\lambda+
        \frac{2\nu_2}{\nu_1}(\nu_2-\nu_1)B(T)
        \qquad
        \forall\lambda\geq 1.
        \label{th:cor-c1}
    \end{equation}

\end{cor}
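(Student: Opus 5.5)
Since $\Phi_1(\lambda,T)$ is a minimum over $s\in[\tl,T]$, it suffices to exhibit one admissible splitting point $\ssl$ for which $\psi_{1,\lambda}(\ssl)$ is bounded by the right-hand side of~(\ref{th:cor-c1}). The natural choice is dictated by the scale $B$. We estimate the two terms of $\psi_{1,\lambda}$ separately, using only $\osc(c,(0,s])\leq\nu_2-\nu_1$ (from~(\ref{hp:c1-c-sh})), $\int_{\tl}^{s}b\leq B(s)$, and $\int_s^T|c'|\leq M\int_s^T b/B=M\log(B(T)/B(s))$. This last identity holds because $B$ is absolutely continuous, strictly increasing (since $b>0$ on $(0,T]$) and $B'=b$ almost everywhere.

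This gives, for every $s\in[\tl,T]$,
\[
\psi_{1,\lambda}(s)\leq \frac{2\nu_2}{\nu_1}(\nu_2-\nu_1)\lambda B(s)+\frac{2M}{\nu_1}\log\frac{B(T)}{B(s)}.
\]
We would like $\lambda B(s)\approx B(T)$ up to constants, that is, $B(s)=B(T)/\lambda$. Then the first term becomes exactly $\frac{2\nu_2}{\nu_1}(\nu_2-\nu_1)B(T)$ and the second becomes $\frac{2M}{\nu_1}\log\lambda$, which is precisely~(\ref{th:cor-c1}). For $\lambda\geq1$, continuity and monotonicity of $B$ provide a unique $\sigma_\lambda\in(0,T]$ with $B(\sigma_\lambda)=B(T)/\lambda$. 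Note that $B(T)>0$ because $b>0$.

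The main obstacle is admissibility: $\sigma_\lambda$ may be smaller than $\tl$. We set $\ssl:=\max\{\sigma_\lambda,\tl\}$. If $\sigma_\lambda\geq\tl$, we are done. Otherwise $\ssl=\tl$, and the first term of $\psi_{1,\lambda}$ vanishes identically, since the integral from $\tl$ to $\tl$ is zero. This is the reason to keep $\int_{\tl}^{s}b$ rather than $B(s)$ in this case. The second term is then $\frac{2}{\nu_1}\int_{\tl}^T|c'|\leq\frac{2M}{\nu_1}\log(B(T)/B(\tl))\leq\frac{2M}{\nu_1}\log(B(T)/B(\sigma_\lambda))=\frac{2M}{\nu_1}\log\lambda$, using $B(\tl)\geq B(\sigma_\lambda)$.

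Two degenerate points need a brief check. First, if $\tl=0$, then $B(\tl)=0$ and we are necessarily in the first case, because $\sigma_\lambda>0$. Second, if $\tl=T$, the bound is trivial. In either case the estimate holds, which proves~(\ref{th:cor-c1}) for all $\lambda\geq1$.
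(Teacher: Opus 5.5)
Your proposal is correct and follows the paper's proof: it uses the same competitor $s=\sigma_\lambda$ with $B(\sigma_\lambda)=B(T)/\lambda$. When $\sigma_\lambda<\tl$ it falls back to $s=\tl$, where the first term of $\psi_{1,\lambda}$ vanishes and $B(\tl)\geq B(\sigma_\lambda)$ bounds the logarithm by $\log\lambda$. Your check of the case $\tl=0$ is harmless but unnecessary, since $\tl>0$ always holds because $b\leq b(T)$.
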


The following examples show how this natural scale specializes to familiar classes of coefficients $b$. In both examples no smallness assumption is made on the oscillation of $c$ near the origin.

\begin{ex}[Power-like degenerate factor]
\begin{em}

Let us assume that $b(t)=t^\ell$ for some $\ell\geq 0$ (this includes the nondegenerate case as well), so that
\begin{equation}
    B(t)=\int_0^t \tau^\ell\,d\tau
    =
    \frac{t^{\ell+1}}{\ell+1}
    \qquad\quad\text{and}\quad\qquad
    \frac{b(t)}{B(t)}
    =
    \frac{\ell+1}{t}.
    \nonumber
\end{equation}
Thus Corollary~\ref{cor:natural-scale-c1} shows that, if
\begin{equation}
    |c'(t)|\leq \frac{M}{t}
    \qquad
    \forall t\in(0,T],
    \nonumber
\end{equation}
then the contribution of the oscillating factor $c(t)$ produces at most finite derivative loss.

In other words, for power-like degeneracies the borderline growth of $c'$ is the classical one, namely of order $1/t$. In this case the degeneracy of $b$ does not enlarge the class of admissible singularities for $c'$ beyond the classical threshold.

\end{em}
\end{ex}

\begin{ex}[Exponentially flat degenerate factor]
\begin{em}

Let us assume that, for some $\alpha>0$,
\begin{equation}
    b(t)=\exp\left(-\frac{1}{t^\alpha}\right)
    \qquad
    \text{for }t\text{ close to }0.
    \nonumber
\end{equation}
Then one has
\begin{equation}
    B(t):=\int_0^t b(\tau)\,d\tau
    \simeq
    t^{\alpha+1}\exp\left(-\frac{1}{t^\alpha}\right)
    \qquad
    \text{as $t\to0^+$},
    \nonumber
\end{equation}
and therefore,
\begin{equation}
    \frac{b(t)}{B(t)}
    \simeq
    \frac{1}{t^{\alpha+1}}.
    \nonumber
\end{equation}
Thus Corollary~\ref{cor:natural-scale-c1} shows that, if
\begin{equation}
    |c'(t)|\leq \frac{M}{t^{\alpha+1}}
    \qquad
    \forall t\in(0,T],
    \nonumber
\end{equation}
then the contribution of the oscillating factor $c(t)$ produces at most finite derivative loss.

This example shows that a flatter degeneracy of the factor $b$ can compensate for a stronger singularity of $c'$ at the origin. In the present case, the admissible growth is of order $t^{-\alpha-1}$, which is larger than the classical borderline $t^{-1}$.

We observe that in this case the resulting propagation speed $\gamma(t)=b(t)c(t)$ is of class $C^1$ up to the origin, with $\gamma'(0)=0$. In the nondegenerate theory, this regularity would already prevent any derivative loss. Here, however, the propagation speed vanishes at the origin, and therefore the problem remains genuinely degenerate. The point of the example is that the flatness of $b$ compensates for the singular behavior of $c'$, without reducing the situation to a trivial nondegenerate one.

\end{em}
\end{ex}

\subsubsection{Second-order control away from the origin}

In the final result, the coefficient $c$ is assumed to be of class $C^2$ for positive times, but may have a singular behavior as $t\to0^+$. As in the first-order theory, the behavior near the origin is measured by the oscillation of $c$ in the interval $(0,s]$. Away from the origin, however, we also exploit the second-order information on $c$. This leads to a minimization problem with two frequency-dependent splitting points.

\begin{thm}[$C^2$-theory]\label{thm:c2}

Let $T$, $\nu_1$, $\nu_2$ be three positive real numbers with $\nu_1\leq\nu_2$. Let $b:[0,T]\to[0,+\infty)$ be a nondecreasing function, and let $c\in C^2((0,T])$ be a function satisfying the strict hyperbolicity condition~(\ref{hp:c1-c-sh}).

For every $s\in(0,T]$ we consider the oscillation of $c$ in $(0,s]$, defined as in~(\ref{defn:osc-0s}).

For every $\lambda>0$, we define $\tl$ as in~(\ref{defn:tl-nu2}), and we define $A_\lambda$ as follows. If
\begin{equation}
    |c'(t)|\leq \lambda b(t)c(t)^2
    \qquad
    \forall t\in[\tl,T],
    \nonumber
\end{equation}
then we set $A_\lambda:=\tl$. Otherwise, we set
\begin{equation}
    A_\lambda:=
    \sup\left\{t\in[\tl,T]:
    |c'(t)|>\lambda b(t)c(t)^2
    \right\}.
    \label{defn:Al}
\end{equation}

Finally, we consider the function $\psi_{2,\lambda}:[\tl,T]^2\to\re$ defined by
\begin{eqnarray}
    \psi_{2,\lambda}(s,a) & := &
    \frac{2\nu_2}{\nu_1}\lambda\cdot\osc(c,(0,s])\int_{\tl}^{s}b(t)\,dt+
    \frac{2}{\nu_1}\int_s^a |c'(t)|\,dt
    \nonumber
    \\[1ex]
    & &
    \mbox{}+\frac{3}{\lambda}
    \int_{a}^{T}\frac{1}{b(t)}
    \left(\frac{|c''(t)|}{c(t)^2}+\frac{c'(t)^2}{c(t)^3}\right)\,dt,
    \label{defn:psi-2}
\end{eqnarray}
and the number
\begin{equation}
    \Phi_2(\lambda,T):=\min\left\{\psi_{2,\lambda}(s,a):
    A_\lambda\leq a\leq T,\ \tl\leq s\leq a\right\}.
    \label{defn:Phi2}
\end{equation}

Then every solution $\ul$ to~(\ref{eqn:main-ode-c}) satisfies
\begin{eqnarray}
    \ul'(t)^2+\lambda^2\ul(t)^2 & \leq &
    K_2(\nu_1,\nu_2)\cdot\exp\left(\Phi_2(\lambda,T)\right)\cdot\left(1+\nu_2^2 b(T)^2\right)
    \nonumber
    \\[0.5ex]
    & &
    \cdot\left[(1+\lambda^2\tl^2)\cdot\ul'(0)^2+\lambda^2\ul(0)^2\right]
    \label{th:C2-theory}
\end{eqnarray}
for every $t\in [0,T]$, where
\begin{equation}
    K_{2}(\nu_1,\nu_2):=
    15\left(\frac{\nu_2}{\nu_1}\right)^3\exp\left(\frac{\nu_2}{\nu_1}\right).
    \nonumber
\end{equation}

\end{thm}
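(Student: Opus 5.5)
Fix $\lambda>0$ and a pair $(s,a)$ with $A_\lambda\le a\le T$ and $\tl\le s\le a$. We split $[0,T]$ into four regions, $[0,\tl]$, $[\tl,s]$, $[s,a]$ and $[a,T]$, and use an adapted energy on each. Since the constant is independent of $(s,a)$, minimizing over $(s,a)$ at the end gives $\exp(\Phi_2)$. The first three regions are handled exactly as in the proof of Theorem~\ref{thm:c1}; the new ingredient is a Tarama-type energy on $[a,T]$.

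\emph{Region $[0,\tl]$.} Here $\nu_2\lambda t b(t)\le 1$, so the speed is tiny compared with $1/t$. We use the Kowalevskian energy $\Ek(t)=\ul'(t)^2+\lambda^2\ul(t)^2$ (possibly with a weight). A Gronwall argument gives $|\ul'(t)|\le e\,|\ul'(0)|+\ldots$ and $|\ul(t)|\le|\ul(0)|+t|\ul'(0)|$. This is where the factor $(1+\lambda^2\tl^2)$ in front of $\ul'(0)^2$ comes from.

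\emph{Region $[\tl,s]$.} We freeze the coefficient: consider $\ul'^2+\lambda^2b(t)^2\bar c^2\ul^2$ with $\bar c=c(s)$. The error term $\lambda^2b^2|c^2-\bar c^2|\,|\ul\ul'|$ is bounded by $\lambda b\,(2\nu_2/\nu_1)\osc(c,(0,s])$ times the energy, while the derivative of $b^2$ has a favourable sign because $b$ is nondecreasing. Gronwall then yields the first term of $\psi_{2,\lambda}$.

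\emph{Region $[s,a]$.} We use the hyperbolic energy $\Eh=\ul'^2+\lambda^2b^2c^2\ul^2$. Its derivative is $2\lambda^2(bc)(bc)'\ul^2$; the $b'$ part is harmless (monotonicity, or a $b$-normalized version of the energy), and the $c'$ part is at most $(2|c'|/\nu_1)\Eh$. This gives $\frac{2}{\nu_1}\int_s^a|c'|$.

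\emph{Region $[a,T]$.} Since $a\ge A_\lambda$, we have $|c'|\le\lambda bc^2$ there. We use a Tarama-type energy
\[
\Et=\Bigl|\ul'-\tfrac{c'}{2c}\ul\Bigr|^2+\lambda^2b^2c^2\ul^2
\]
(or a variant normalized by $\lambda bc$ such as $\ul'^2/(\lambda bc)+\lambda bc\,\ul^2+\frac{c'}{c}\ul\ul'$). The condition $|c'|\le\lambda bc^2$ makes $\Et$ equivalent to $\Eh$ with constants depending on $\nu_2/\nu_1$; this is the origin of the extra factor $(\nu_2/\nu_1)$ and of the 15 in $K_2$. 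Differentiating, the $c'$ terms cancel at first order, and the remainder is bounded by $\frac{3}{\lambda b}\bigl(|c''|/c^2+c'^2/c^3\bigr)\Et$, again with the $b'$ contributions having a favourable sign. Gronwall gives the third term.

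\emph{Gluing.} Finally we chain the four estimates, converting between the energies at $\tl$, $s$, $a$ and at the final time by means of the bounds $\nu_1\le c\le\nu_2$ and $b\le b(T)$. This produces the factors $(\nu_2/\nu_1)^k$ and $1+\nu_2^2b(T)^2$; the $\exp(\nu_2/\nu_1)$ factor is inherited from the Kowalevskian step.

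\textbf{Main obstacle.} The hard step is the computation of $\Et'$ on $[a,T]$. The difficulty is arranging the correction term so that the $b'$ contributions either cancel or have a good sign for merely monotone $b$, which may be nonsmooth. To get around this, I would first prove the estimate for smooth $b$, keeping only $b'\ge0$, and then pass to the limit by approximation. The approximation must keep both $\tl$ and $A_\lambda$ under control.
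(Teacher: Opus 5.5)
You have the paper's decomposition: $[0,\tl]$, $[\tl,s]$, $[s,a]$, $[a,T]$, with the frozen coefficient $c(s)$, the hyperbolic estimate with $d=c$, and the reduction to smooth $b$. The gap is the one genuinely new ingredient, the Tarama estimate on $[a,T]$. As written it does not work, and you leave its computation open.

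\textbf{Your candidate energy does not cancel.} For your main candidate, set $k:=c'/(2c)$. The equation $\ul''=-\lambda^2b^2c^2\ul$ gives
\[
\frac{d}{dt}\Bigl(|\ul'-k\ul|^2+\lambda^2b^2c^2\ul^2\Bigr)
=-2k\,\ul'^2+\Bigl(2k+\frac{2c'}{c}\Bigr)\lambda^2b^2c^2\ul^2+2\lambda^2bb'c^2\ul^2+R,
\]
where $R$ collects the terms with $k^2$, $k'$, $kk'$, which are of Tarama size. The first two terms equal $-\frac{c'}{c}\ul'^2+3\lambda^2b^2cc'\ul^2$. This is of first order in $c'$, not $O(\lambda^{-1})$, so the cancellation you claim does not happen.

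No choice of $k$ fixes this. Since $(\ul\ul')'=\ul'^2-\lambda^2b^2c^2\ul^2$, a cross term only adds multiples of $\ul'^2-\lambda^2b^2c^2\ul^2$ at first order. The component $\frac{c'}{c}\bigl(\ul'^2+\lambda^2b^2c^2\ul^2\bigr)$ always survives. This reflects the fact that the hyperbolic energy genuinely grows like $c$, so the weights of $\ul'^2$ and $\ul^2$ must be divided by $c$.

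Your variant goes in that direction, but its cross term $\frac{c'}{c}\ul\ul'$ has the wrong scaling; it should be $\frac{c'}{\lambda bc^2}\ul\ul'$. Also, putting $b$ into the correction creates further $b'$-terms, whose total exponent must be tracked carefully, for the reason explained below.

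\textbf{The energy that works.} The paper uses
\[
\Et=\frac{\ul'^2}{c}+\lambda^2b^2c\,\ul^2+\frac{c'}{c^2}\ul\ul'+\frac{c'^2}{c^3}\ul^2 .
\]
Apart from the $b'$-term, the derivative of the first two summands is $-\frac{c'}{c^2}\ul'^2+\lambda^2b^2c'\ul^2$. This is cancelled exactly by the contribution $\frac{c'}{c^2}\bigl(\ul'^2-\lambda^2b^2c^2\ul^2\bigr)$ of the cross term. The last summand cancels the $-\frac{2c'^2}{c^3}\ul\ul'$ produced by differentiating $c'/c^2$. What remains is
\[
2\lambda^2bb'c\,\ul^2+\frac{c''}{c^2}\ul\ul'+\frac{c'}{c^3}\Bigl(2c''-\frac{3c'^2}{c}\Bigr)\ul^2 .
\]

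\textbf{Handling the $b'$-term.} Since the corrections do not involve $b$, the only $b'$-term is $2\lambda^2bb'c\,\ul^2\geq0$. It is not of favourable sign, but it is bounded by $\frac{2b'}{b}\Et$ and integrates to exactly $b(t)^2/b(a)^2$. This exponent matters. In the gluing one divides by $b(t)^2$ and multiplies by $1/b(\tl)^2=\nu_2^2\lambda^2\tl^2$. A higher power of $b(t)/b(a)$ would in general cost extra powers of $\lambda\tl$ and destroy the factor $(1+\lambda^2\tl^2)$. The same applies to your ``favourable sign'' remark on $[\tl,s]$: the $b'$-term there is also nonnegative, and it is harmless only because it yields exactly $b(t)^2/b(\tl)^2$.

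\textbf{Where the hypothesis and constants enter.} The hypothesis $|c'|\leq\lambda bc^2$ is used in two places:
\begin{enumerate}
\item for the upper bound $\Et\leq\frac32\frac{\ul'^2}{c}+\frac52\lambda^2b^2c\,\ul^2$;
\item to bound the last remainder by $\frac{1}{\lambda b}\bigl(\frac{2|c''|}{c^2}+\frac{3c'^2}{c^3}\bigr)\Et$.
\end{enumerate}
The resulting factor $5c(t)/c(a)\leq5\nu_2/\nu_1$, times $3(\nu_2/\nu_1)^2$ from the gluing and $e^{\nu_2/\nu_1}$ from the Kowalevskian step, gives $K_2$.

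\textbf{The weight on $[0,\tl]$.} The unweighted energy $\ul'^2+\lambda^2\ul^2$ only gives growth like $e^{C\lambda\tl}$. The weight has to be $\lambda^2b(\tl)^2c(s)^2$. This makes the error integrate to at most $\nu_2/\nu_1$, and makes the energy coincide at $\tl$ with the frozen hyperbolic one.
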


Let us comment on the assumptions and conclusions of Theorem~\ref{thm:c2}.

\begin{rmk}[Comparison with the first-order theory]
\begin{em}

This result can be regarded as an extension of Theorem~\ref{thm:c1}, and the comments in Remarks~\ref{rmk:c1-omega} and~\ref{rmk:c1-der-loss} remain valid also in this setting.

The function $\psi_{2,\lambda}$ is analogous to $\psi_{1,\lambda}$, but contains one additional term. The three terms in $\psi_{2,\lambda}$ have different homogeneities with respect to $\lambda$. The first term has degree~1 and involves the zeroth-order behavior of $c(t)$ near the origin, measured by the oscillation of $c$ in $(0,s]$. The second term has degree~0 and involves the first derivative of $c(t)$. The third term has degree~$-1$ and involves the second derivative of $c(t)$.

Thus the minimization problem in~(\ref{defn:Phi2}) contains two splitting points, $\ssl$ and $\al$, with precise roles. The first one separates the region where one uses only the oscillation of $c$ near the origin from the region where one exploits the first-order information on $c$. The second one separates the latter region from the region where the second-order information on $c$ becomes effective.

\end{em}
\end{rmk}

The next corollary identifies a natural scale in the second-order theory. It shows that, even without any smallness assumption on the oscillation of $c$ near the origin, the simultaneous control of $c'$ and $c''$ may be strong enough to keep the contribution of the oscillating factor logarithmic in the frequency, and hence, to prevent any additional derivative loss due to oscillations.

\begin{cor}[A natural scale for the second-order theory]\label{cor:natural-scale-c2}

Let $b$ and $c$ be as in Theorem~\ref{thm:c2}, and let us assume in addition that $b(t)>0$ for every $t\in(0,T]$. We define $B(t)$ as in (\ref{defn:B}), and we suppose that $B(T)\leq 1$. In addition, we assume that there exists a positive real number $M$ such that, for every $t\in(0,T]$,
\begin{equation}
    |c'(t)|\leq M\frac{b(t)}{B(t)}|\log B(t)|
    \qquad\text{and}\qquad
    |c''(t)|\leq
    M^2\frac{b(t)^2}{B(t)^2}|\log B(t)|^2.
    \label{hp:c2-second}
\end{equation}

Then, for every $\lambda$ large enough to satisfy
\begin{equation}
    \lambda>\frac{M}{\nu_1^2}\frac{|\log B(T)|}{B(T)},
    \qquad\qquad
    \log(\nu_2\lambda)>\frac{\nu_1^2}{M\nu_2},
    \label{hp:lambda-c2}
\end{equation}
one has
\begin{equation}
    \Phi_2(\lambda,T)\leq
    \left[
    \frac{2M\nu_2(\nu_2-\nu_1)}{\nu_1^3}
    +
    \frac{6M(\nu_1+1)}{\nu_1}
    \right]\log(\nu_2\lambda)
    +
    \frac{9\nu_2M^2(\nu_1+1)}{\nu_1^3}.
    \nonumber
\end{equation}

\end{cor}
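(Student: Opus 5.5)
Since $\Phi_2(\lambda,T)$ is a minimum, it suffices to exhibit one admissible pair $(s,a)$ with $A_\lambda\leq a\leq T$ and $\tl\leq s\leq a$ whose value $\psi_{2,\lambda}(s,a)$ is below the claimed bound. The natural choice is to tie both splitting points to a single level of $B$. We let $\al$ be the point where $\lambda B(\al)=\frac{M}{\nu_1^2}|\log B(\al)|$, i.e.\ essentially $B(\al)\simeq \frac{M\log\lambda}{\lambda}$; the first condition in (\ref{hp:lambda-c2}) guarantees that such a point exists in $(0,T)$, since the function $t\mapsto\lambda B(t)-\frac{M}{\nu_1^2}|\log B(t)|$ is increasing, tends to $-\infty$ at $0$, and is positive at $T$. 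We then take $s=a=\al$, so the middle term of $\psi_{2,\lambda}$ vanishes, unless $\al<\tl$, in which case we take $s=a=\tl$.

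\emph{Step 1: admissibility, $a\geq A_\lambda$.} For $t\geq \al$ the first estimate in (\ref{hp:c2-second}) gives
\[
|c'(t)|\leq M\frac{b(t)}{B(t)}|\log B(t)|\leq \lambda\nu_1^2 b(t)\leq \lambda b(t)c(t)^2 ,
\]
because $|\log B|/B$ is decreasing in $B$ when $B\leq1$, and $B$ is nondecreasing. Hence the set in (\ref{defn:Al}) lies below $\al$, so $A_\lambda\leq\al$. If instead $\al<\tl$, the same inequality holds on $[\tl,T]$, and then $A_\lambda=\tl$.

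\emph{Step 2: the first term.} We use $\osc(c,(0,s])\leq\nu_2-\nu_1$ and $\int_{\tl}^{s}b\leq B(\al)$. The defining relation of $\al$ then gives
\[
\lambda B(\al)=\frac{M}{\nu_1^2}|\log B(\al)|,
\]
so the first term is at most $\frac{2\nu_2(\nu_2-\nu_1)M}{\nu_1^3}|\log B(\al)|$. It remains to show $|\log B(\al)|\leq\log(\nu_2\lambda)$, i.e.\ $B(\al)\geq 1/(\nu_2\lambda)$. This is where the second condition in (\ref{hp:lambda-c2}) enters. If $B(\al)<1/(\nu_2\lambda)$, then
\[
\lambda B(\al)\geq \frac{M}{\nu_1^2}\log(\nu_2\lambda)>\frac{1}{\nu_2},
\]
which is a contradiction.

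\emph{Step 3: the third term.} This is the main computation. Using (\ref{hp:c2-second}) and $\nu_1\leq c$, the integrand is bounded by
\[
\frac{M^2(1+1/\nu_1)}{\nu_1^2}\,\frac{b}{B^2}\,\log^2 B .
\]
The factor $\frac{1}{\nu_1^2}(1+\frac{1}{\nu_1})$ should be compared with the constant $(\nu_1+1)/\nu_1^3$ in the claim. Substituting $x=B(t)$, the third term is at most
\[
\frac{3M^2(\nu_1+1)}{\nu_1^3\lambda}\int_{B(\al)}^{1}\frac{\log^2x}{x^2}\,dx .
\]
An explicit primitive gives
\[
\int_{y}^{1}\frac{\log^2x}{x^2}\,dx\leq \frac{\log^2y+2|\log y|+2}{y}.
\]
With $y=B(\al)$ we have $1/(\lambda y)=\nu_1^2/(M|\log y|)$, so the third term is at most
\[
\frac{3M(\nu_1+1)}{\nu_1}\left(|\log y|+2+\frac{2}{|\log y|}\right).
\]
Combining this with Step 2 and the lower bound on $|\log y|$ coming from the second condition in (\ref{hp:lambda-c2}), the term $\frac{2}{|\log y|}$ is bounded by a constant. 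This produces the $\log(\nu_2\lambda)$ coefficient $\frac{6M(\nu_1+1)}{\nu_1}$ (after absorbing the constant $2$ into a second logarithm) and the additive constant $9\nu_2M^2(\nu_1+1)/\nu_1^3$.

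The bookkeeping of constants in Step 3 is the delicate part. The lower bound on $|\log B(\al)|$ must be chosen so that the additive constants match the stated ones, and in the case $\al<\tl$ the same estimates must be checked with $y=B(\tl)\geq B(\al)$. That case only improves the third term, but the first term must be bounded using $\int_{\tl}^{\tl}b=0$.
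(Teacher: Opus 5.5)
\paragraph{Assessment.} Your route is the paper's: the same point $a_\lambda$ defined by $\lambda B(a_\lambda)=\frac{M}{\nu_1^2}|\log B(a_\lambda)|$, the same competitor $(a_\lambda,a_\lambda)$, the same use of the second condition in (\ref{hp:lambda-c2}) to get $B(a_\lambda)\ge 1/(\nu_2\lambda)$, and the same substitution $x=B(t)$. Steps~1 and~2 are correct. Your side case $a_\lambda<t_\lambda$ is harmless, and in fact it never occurs: $B(t)\le tb(t)$ gives $B(t_\lambda)\le 1/(\nu_2\lambda)\le B(a_\lambda)$, which is essentially how the paper proves $t_\lambda<a_\lambda$.

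\paragraph{The gap.} It is in the final bookkeeping of Step~3, and it matters because the statement has explicit constants. After dividing by $\frac{3M(\nu_1+1)}{\nu_1}$, you must show
\[
|\log y|+2+\frac{2}{|\log y|}\le 2\log(\nu_2\lambda)+\frac{3M\nu_2}{\nu_1^2}.
\]
The terms $|\log y|$ and $2/|\log y|$ are under control, because $|\log y|\le\log(\nu_2\lambda)$ and $|\log y|=\frac{\nu_1^2}{M}\lambda y\ge\frac{\nu_1^2}{M\nu_2}$. However, ``absorbing the constant $2$ into a second logarithm'' would require $\log(\nu_2\lambda)\ge 2$. The hypotheses (\ref{hp:lambda-c2}) do not give this: they only give $\log(\nu_2\lambda)>\nu_1^2/(M\nu_2)$, which may be arbitrarily small. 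So, as written, the stated constants are not established.

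\paragraph{How to close it.} The constant $2$ is an artifact of bounding $2\log y$ by $2|\log y|$ in the primitive. Since $y<1$, the exact value gives
\[
\int_y^1\frac{\log^2x}{x^2}\,dx=\frac{\log^2y-2|\log y|+2}{y}-2\le\frac{\log^2 y+2}{y}.
\]
Combined with $\frac{\log^2y}{\lambda y}=\frac{\nu_1^2}{M}|\log y|$ and $\frac{2}{\lambda y}\le 2\nu_2$, this bounds the third term by
\[
\frac{3M(\nu_1+1)}{\nu_1}\log(\nu_2\lambda)+\frac{6\nu_2M^2(\nu_1+1)}{\nu_1^3},
\]
which is better than needed.

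Alternatively, you can keep your bound. Set $x_0:=\nu_1^2/(M\nu_2)$, so that $x_0\le|\log y|\le\log(\nu_2\lambda)$. Then use $2\le x_0+1/x_0<\log(\nu_2\lambda)+1/x_0$. This turns the left-hand side into at most $2\log(\nu_2\lambda)+3/x_0$, which gives exactly the stated constants.

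For comparison, the paper uses $2\log y\le\log^2y+1$ to obtain $(2\log^2y+3)/y$, and then $3/(\lambda y)\le 3\nu_2$.
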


\begin{rmk}[On the two splitting points]
\begin{em}

In the proof of Corollary~\ref{cor:natural-scale-c2}, the specific choice of the second-order scale in~(\ref{hp:c2-second}) allows us to choose an admissible competitor in the minimization problem for $\Phi_2(\lambda,T)$ with $s=a$. Thus the two splitting points collapse into a single one, and the interval is effectively divided into only two regions.

This simplification is a consequence of the special balance between the first- and second-order assumptions in~(\ref{hp:c2-second}). In the general second-order theory, however, there is no reason for the two optimal splitting points to coincide. When they are distinct, the minimization problem naturally involves three regions, corresponding to the three different pieces of information which enter the estimate.

This phenomenon already appears in the nondegenerate theory, in the proof of~\cite[Theorem~2.1]{gg:CDSR-optimal}, where two independent splitting points and hence, three regions enter the estimate. A closely related three-region construction is discussed in~\cite[Example~6.10]{gg:summa-theologica}.

\end{em}
\end{rmk}

\begin{rmk}[On the absence of higher-order theories]
\begin{em}

One might wonder whether the first- and second-order results presented above are the first two steps of a hierarchy of higher-order theories. In the present framework this is not the case. In fact, already in the nondegenerate setting, the information relevant to derivative-loss estimates saturates at the second order.

This is due to structural reasons rather than to a limitation of the method. A detailed discussion of this point, including the nondegenerate case and the role of higher-order information, can be found in~\cite{gg:summa-theologica}.

\end{em}
\end{rmk}


\setcounter{equation}{0}
\section{General energy estimates}\label{sec:energy}

In this section we prove the energy estimates that will be used in the proof of the main results. We work under the regularity assumptions on the coefficients that are needed in order to differentiate the corresponding energies. The passage from regular monotone coefficients to arbitrary monotone coefficients will be carried out later, in Section~\ref{sec:proofs}, at the level of the main results.

The estimates correspond to the three regimes used throughout the paper: the Kowalevskian regime close to the degeneracy, the hyperbolic regime based on first-order information, and the Tarama-type regime based on second-order information.


The first estimate is based on a Kowalevskian energy, namely a linear combination of $\ul'(t)^2$ and $\lambda^2\ul(t)^2$ with positive constant coefficients. This estimate is useful only in the initial regime, more precisely, up to the threshold time $\tl$.

\begin{prop}[General estimate of Kowalevskian type]\label{prop:kow}

Let $T$, $d_0$, $\nu_1$, and $\nu_2$ be positive real numbers with $\nu_1\leq d_0\leq \nu_2$. Let $b:[0,T]\to[0,+\infty)$ be a nondecreasing continuous function, and let $c:(0,T)\to[\nu_1,\nu_2]$ be a continuous function.

For every real number $\lambda\geq 0$, we define $\tl$ as in~\eqref{defn:tl-nu2}.

Then every solution to~\eqref{eqn:main-ode-c} satisfies
\begin{equation}
    \ul'(t)^2+\lambda^2 b(\tl)^2d_0^2\,\ul(t)^2\leq
    \exp\left(\frac{\nu_2}{\nu_1}\right)
    \left[\ul'(0)^2+\lambda^2 b(\tl)^2d_0^2\,\ul(0)^2\right]
    \label{th:est-kow}
\end{equation}
for every $t\in[0,\tl]$.

\end{prop}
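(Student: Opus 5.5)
We differentiate the Kowalevskian energy
\[
\Ek(t):=\ul'(t)^2+\lambda^2 b(\tl)^2d_0^2\,\ul(t)^2
\]
on $[0,\tl]$ and close with Gronwall's lemma. Since $b$ and $c$ are continuous on $(0,T)$, solutions of \eqref{eqn:main-ode-c} are $C^2$ there and $C^1$ up to $t=0$ (the coefficient is bounded), so $\Ek$ is $C^1$ and the differentiation is legitimate. Along solutions,
\[
\Ek'(t)=2\ul'\ul''+2\lambda^2b(\tl)^2d_0^2\ul\ul'
=2\lambda^2\ul\ul'\left(b(\tl)^2d_0^2-b(t)^2c(t)^2\right).
\]

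Next we bound the bracket for $t\in[0,\tl]$. Monotonicity gives $0\leq b(t)\leq b(\tl)$, and $c,d_0\in[\nu_1,\nu_2]$, so $|b(\tl)^2d_0^2-b(t)^2c(t)^2|\leq \nu_2^2b(\tl)^2$. By Young's inequality,
\[
2\lambda^2|\ul\ul'|\leq \frac{\lambda}{b(\tl)d_0}\left(\ul'^2+\lambda^2b(\tl)^2d_0^2\ul^2\right)=\frac{\lambda}{b(\tl)d_0}\Ek .
\]
Combining the two bounds,
\[
\Ek'(t)\leq \frac{\nu_2^2}{d_0}\lambda b(\tl)\,\Ek(t)\leq\frac{\nu_2^2}{\nu_1}\lambda b(\tl)\,\Ek(t).
\]
Gronwall's lemma on $[0,\tl]$ then gives the exponent $\frac{\nu_2^2}{\nu_1}\lambda\tl b(\tl)$. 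By continuity of $b$ and the definition \eqref{defn:tl-nu2}, we have $\nu_2\lambda\tl b(\tl)\leq1$, so the exponent is at most $\nu_2/\nu_1$, which is \eqref{th:est-kow}.

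The main obstacle is the degenerate case $b(\tl)=0$ (or $\lambda=0$), where the Young step divides by zero. In that case $b\equiv0$ on $[0,\tl]$, so $\ul''=0$ there and $\ul'$ is constant. The energy then reduces to $\ul'(t)^2=\ul'(0)^2$, and the estimate holds trivially because $\exp(\nu_2/\nu_1)\geq1$. Two further points need care: the endpoint $\tl$ itself, which is handled by continuity of $b$ (note $\tl$ may equal $T$), and the fact that $c$ is only defined on $(0,T)$, which only affects the endpoints, a set of measure zero, and so does not matter for the Gronwall argument.
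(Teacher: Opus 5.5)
Your proposal is correct and follows the paper's proof essentially line by line: the same Kowalevskian energy, the same bound $\nu_2^2 b(\tl)^2$ on the bracket, the same differential inequality $\Ek'\leq(\nu_2^2/\nu_1)\lambda b(\tl)\Ek$, and the same use of $\nu_2\lambda\tl b(\tl)\leq 1$ to bound the exponent. Your separate treatment of $b(\tl)=0$ is valid but unnecessary: the paper writes $2\lambda^2 b(\tl)^2\nu_2^2|\ul\ul'|=(\lambda b(\tl)\nu_2^2/d_0)\cdot 2\lambda b(\tl)d_0|\ul\ul'|$, which involves no division, and then bounds the second factor by $\Ek$.
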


\begin{proof}

Since $b$ is bounded from above, from~\eqref{defn:tl-nu2} it follows that $\tl>0$, and by continuity also
\begin{equation}
\nu_2\lambda\tl b(\tl)\leq 1.
\label{est:b-tl}
\end{equation}

In the interval $[0,\tl]$ we consider the energy
\begin{equation}
\Ek(t):=\ul'(t)^{2}+\lambda^{2}b(\tl)^{2}d_0^2\ul(t)^{2}.
    \nonumber
\end{equation}
Due to the regularity of $b$ and $c$, this energy is continuous in $[0,\tl]$, and of class $C^{1}$ in $(0,\tl)$, and its time derivative is equal to
\begin{equation}
\Ek'(t)=2\lambda^{2}\ul'(t)\ul(t)
\left(b(\tl)^{2}d_0^2-b(t)^{2}c(t)^2\right)
\qquad
\forall t\in(0,\tl).
    \nonumber
\end{equation}

Since $b$ is nondecreasing, and both $d_0$ and $c(t)$ lie in the interval $[\nu_1,\nu_2]$, it follows that
\begin{equation}
    \left|b(\tl)^{2}d_0^2-b(t)^{2}c(t)^2\right|\leq
    \max\left\{b(\tl)^{2}d_0^2,b(t)^{2}c(t)^2\right\}\leq
    b(\tl)^2 \nu_2^2
    \nonumber
\end{equation}
for every $t\in[0,\tl]$, and therefore
\begin{equation}
\Ek'(t)\leq
2\lambda^{2}|\ul'(t)|\cdot|\ul(t)|\cdot b(\tl)^{2}\nu_2^2
    \nonumber
\end{equation}
in the same interval. Since
\[
    2\lambda b(\tl)d_0\,|\ul'(t)|\cdot|\ul(t)|
    \leq \Ek(t),
\]
we deduce that
\begin{equation}
\Ek'(t)\leq
\lambda b(\tl)\frac{\nu_2^2}{d_0}\cdot\Ek(t)
\leq
\lambda b(\tl)\frac{\nu_2^2}{\nu_1}\cdot\Ek(t)
\qquad
\forall t\in(0,\tl).
    \nonumber
\end{equation}

Integrating this differential inequality, and recalling~\eqref{est:b-tl}, we conclude that
\begin{equation}
    \Ek(t)\leq
    \exp\left(\frac{\nu_2^2}{\nu_1}\lambda b(\tl)\tl\right)\Ek(0)
    \leq
    \exp\left(\frac{\nu_2}{\nu_1}\right)\Ek(0)
    \nonumber
\end{equation}
for every $t\in[0,\tl]$, which proves~\eqref{th:est-kow}.
\end{proof}


The second energy estimate is based on a hyperbolic energy, namely a linear combination of $\ul'(t)^2$ and $\lambda^2\ul(t)^2$ in which the coefficient of the second term depends on time. In contrast with the Kowalevskian energy, this energy remains effective on larger time intervals.

\begin{prop}[General estimate of hyperbolic type]\label{prop:hyp}

Let $[\alpha,\beta]\subseteq\re$ be an interval, let $\nu_1$ and $\nu_2$ be two positive real numbers with $\nu_1\leq\nu_2$, and let $\lambda\geq 0$ be a real number.

Let $b:[\alpha,\beta]\to(0,+\infty)$ be a nondecreasing function of class $C^1$, let $c:(\alpha,\beta)\to[\nu_1,\nu_2]$ be a continuous function, and let $d:[\alpha,\beta]\to[\nu_1,\nu_2]$ be a function of class $C^1$.

Then every solution to~\eqref{eqn:main-ode-c} defined on $[\alpha,\beta]$ satisfies
\begin{eqnarray}
    \ul'(t)^2+\lambda^2 b(t)^2 d(t)^2\ul(t)^2 & \leq &
    \exp(\Phih(\lambda,\alpha,\beta))\cdot\frac{b(t)^2}{b(\alpha)^2}
    \nonumber
    \\
    & &
    \cdot\left[\ul'(\alpha)^2+\lambda^2 b(\alpha)^2d(\alpha)^2\ul(\alpha)^2\right]
    \label{th:est-hyp}
\end{eqnarray}
for every $t\in[\alpha,\beta]$, where
\begin{equation}
\Phih(\lambda,\alpha,\beta):=
\frac{2}{\nu_1}\int_{\alpha}^{\beta}|d'(t)|\,dt+
\frac{2\nu_2}{\nu_1}\lambda\cdot
\sup_{t\in[\alpha,\beta]}|d(t)-c(t)|\cdot
\int_{\alpha}^\beta b(t)\,dt.
\label{defn:Phih}
\end{equation}

\end{prop}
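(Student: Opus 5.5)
We consider the hyperbolic energy
\[
\Eh(t):=\ul'(t)^2+\lambda^2b(t)^2d(t)^2\ul(t)^2,
\]
and aim to show the differential inequality
\[
\Eh'(t)\leq\left(\frac{2b'(t)}{b(t)}+\frac{2|d'(t)|}{\nu_1}+\frac{2\nu_2}{\nu_1}\lambda b(t)\,|d(t)-c(t)|\right)\Eh(t).
\]
Integrating it from $\alpha$ to $t$ gives exactly~\eqref{th:est-hyp}. The term $2b'/b$ integrates to $2\log(b(t)/b(\alpha))$, which produces the factor $b(t)^2/b(\alpha)^2$. The other two terms are integrated over all of $[\alpha,\beta]$ and bounded by $\Phih(\lambda,\alpha,\beta)$: in the second term we pull $\sup|d-c|$ out of the integral. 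Since $b$ and $d$ are $C^1$ and $c$ is continuous, $\Eh$ is $C^1$ in $(\alpha,\beta)$ and continuous on $[\alpha,\beta]$, so Gronwall's lemma applies.

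To get the inequality, differentiate and use $\ul''=-\lambda^2b^2c^2\ul$:
\[
\Eh'=2\lambda^2b^2(d^2-c^2)\ul'\ul+2\lambda^2\left(bb'd^2+b^2dd'\right)\ul^2.
\]
We treat each term separately.
\begin{itemize}
\item The term with $b'$ satisfies $2\lambda^2bb'd^2\ul^2\leq(2b'/b)\,\Eh$, using $b'\geq0$ (monotonicity) and $b>0$.
\item The term with $d'$ satisfies $2\lambda^2b^2dd'\ul^2\leq(2|d'|/d)\,\lambda^2b^2d^2\ul^2\leq(2|d'|/\nu_1)\,\Eh$.
\item For the mixed term, write $|d^2-c^2|=|d-c|(d+c)\leq2\nu_2|d-c|$. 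Combined with $2\lambda bd|\ul'||\ul|\leq\Eh$, this gives
\[
2\lambda^2b^2|d^2-c^2||\ul'||\ul|\leq\frac{2\nu_2}{d}\lambda b|d-c|\,\Eh\leq\frac{2\nu_2}{\nu_1}\lambda b|d-c|\,\Eh.
\]
\end{itemize}
Adding the three bounds yields the claimed differential inequality.

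There is no real obstacle. The only care needed is keeping the constants matching~\eqref{defn:Phih}, namely the factor $2$ from $d+c\leq2\nu_2$ and the division by $d\geq\nu_1$. We also check the endpoint behaviour, since $c$ is only defined on the open interval. This is harmless because $\Eh$ is continuous up to the endpoints, so we integrate on $[\alpha+\ep,t]$ and let $\ep\to0^+$.
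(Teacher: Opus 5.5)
Your proposal is correct and matches the paper's proof: it uses the same hyperbolic energy, the same three-term splitting of $\Eh'$ with the same bounds and constants, and integrates via \eqref{eqn:int-b^2} in the same way. The only difference is cosmetic. You keep $|d(t)-c(t)|$ pointwise and take the supremum after integrating, while the paper takes it already in the differential inequality.
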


\begin{proof}

In the interval $[\alpha,\beta]$ we consider the energy
\begin{equation}
\Eh(t):=\ul'(t)^{2}+\lambda^{2}b(t)^{2}d(t)^2\ul(t)^{2}.
    \nonumber
\end{equation}
Due to the time regularity of $b$, $c$, and $d$, this energy is continuous in $[\alpha,\beta]$ and of class $C^{1}$ in $(\alpha,\beta)$, where its time derivative is equal to
\begin{eqnarray*}
\Eh'(t)& = & 2\lambda^{2}b(t)b'(t)d(t)^2\ul(t)^{2}+
2\lambda^2 b(t)^2 d(t)d'(t)\ul(t)^2
\\
& &
\mbox{}+2\lambda^2 b(t)^2\ul(t)\ul'(t)\left(d(t)^2-c(t)^2\right)
\\
& =: &
L_1(t)+L_2(t)+L_3(t).
\end{eqnarray*}

Let us estimate the three terms. Since $b(t)>0$ and $b'(t)\geq 0$, for the first term we obtain
\begin{equation}
    L_1(t)\leq \frac{2b'(t)}{b(t)}\cdot\Eh(t).
    \nonumber
\end{equation}
Similarly, for the second term we obtain
\begin{equation}
    L_2(t)\leq
    \frac{2|d'(t)|}{d(t)}\cdot\Eh(t)\leq
    \frac{2}{\nu_1}|d'(t)|\cdot\Eh(t).
    \nonumber
\end{equation}

Finally, since both $c(t)$ and $d(t)$ lie in $[\nu_1,\nu_2]$, we have
\begin{equation}
    \left|d(t)^2-c(t)^2\right|=
    \left|d(t)+c(t)\right|\cdot\left|d(t)-c(t)\right|\leq
    2\nu_2\cdot\sup_{\tau\in[\alpha,\beta]}|d(\tau)-c(\tau)|.
    \nonumber
\end{equation}
Using also the inequality
\begin{equation}
    2\lambda b(t)d(t)|\ul(t)|\cdot|\ul'(t)|\leq \Eh(t),
    \nonumber
\end{equation}
we obtain
\begin{eqnarray}
    L_3(t)
    & \leq &
    \frac{2\nu_2}{\nu_1}\lambda\cdot
    \sup_{\tau\in[\alpha,\beta]}|d(\tau)-c(\tau)|\cdot b(t)\cdot\Eh(t).
    \nonumber
\end{eqnarray}

From all these estimates we deduce that
\begin{equation*}
\Eh'(t)\leq
\frac{2b'(t)}{b(t)}\Eh(t)+
\left(\frac{2}{\nu_1}|d'(t)|+
\frac{2\nu_2}{\nu_1}\lambda\cdot\sup_{\tau\in[\alpha,\beta]}|d(\tau)-c(\tau)|\cdot b(t)\right)\Eh(t)
\end{equation*}
for every $t\in(\alpha,\beta)$. Integrating this differential inequality in $[\alpha,t]$, and using
\begin{equation}
    \exp\left(2\int_\alpha^t\frac{b'(\tau)}{b(\tau)}\,d\tau\right)
    =
    \frac{b(t)^2}{b(\alpha)^2},
    \label{eqn:int-b^2}
\end{equation}
we obtain~\eqref{th:est-hyp}.
\end{proof}


The third estimate is based on a more refined energy, namely a full quadratic form in $\ul'(t)$ and $\ul(t)$ with time-dependent coefficients, similar to that introduced in~\cite{2007-EJDE-Tarama}. The coefficients are chosen so that, when differentiating the energy, all leading terms cancel out. The remaining terms are of order $\lambda^{-1}$, and therefore, become effective in the high-frequency regime.

The estimate requires a smallness condition on $c'$. In the applications to the main results, this condition will be imposed only in the region where the Tarama-type energy is used, and the corresponding interval will be chosen in a frequency-dependent way.

\begin{prop}[General estimate of Tarama type]\label{prop:tarama}

Let $[\alpha,\beta]\subseteq\re$ be an interval, and let $\lambda>0$ be a real number.

Let $b:[\alpha,\beta]\to(0,+\infty)$ be a nondecreasing function of class $C^1$, and let $c:[\alpha,\beta]\to(0,+\infty)$ be a function of class $C^2$ such that
\begin{equation}
    |c'(t)|\leq
    \lambda b(t)c(t)^2
    \qquad
    \forall t\in[\alpha,\beta].
    \label{hp:tar-c'}
\end{equation}

Then every solution to~\eqref{eqn:main-ode-c} defined on $[\alpha,\beta]$ satisfies
\begin{eqnarray}
    \ul'(t)^2+\lambda^2 b(t)^2 c(t)^2\ul(t)^2 & \leq &
    5\frac{c(t)}{c(\alpha)}\cdot\exp\left(\Phit(\lambda,\alpha,\beta)\right)\cdot\frac{b(t)^2}{b(\alpha)^2}
    \nonumber
    \\[0.5ex]
    & & \cdot\left[\ul'(\alpha)^2+\lambda^2 b(\alpha)^2 c(\alpha)^2\ul(\alpha)^2\right]
    \label{th:est-tar}
\end{eqnarray}
for every $t\in[\alpha,\beta]$, where
\begin{equation}
    \Phit(\lambda,\alpha,\beta):=
    \frac{3}{\lambda}
    \int_{\alpha}^{\beta}\frac{1}{b(\tau)}
    \left(\frac{|c''(\tau)|}{c(\tau)^2}+\frac{c'(\tau)^2}{c(\tau)^3}\right)d\tau.
    \nonumber
\end{equation}

\end{prop}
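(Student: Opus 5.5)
The plan is to use an energy of the type introduced in~\cite{2007-EJDE-Tarama}: a quadratic form in $\ul'$ and $\ul$ whose coefficients are chosen so that, on differentiating it, the leading term produced by $c'$ cancels. Set $\gamma(t):=\lambda b(t)c(t)$ and $G(t):=\ul'(t)^2+\gamma(t)^2\ul(t)^2$, which is the quantity on the left-hand side of~\eqref{th:est-tar}. I would first try
\[
    \Et(t):=\frac{G(t)}{b(t)c(t)}+\theta(t)\,\ul(t)\ul'(t),
    \qquad
    \theta(t):=\frac{c'(t)}{b(t)c(t)^2}.
\]
The correction $\theta$ requires $c\in C^2$, and $b\in C^1$ is used when differentiating $G/(bc)$.

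\textbf{Step 1 (cancellation).} A direct computation using $\ul''=-\gamma^2\ul$ gives
\[
    \left(\frac{G}{bc}\right)'=\left(\frac{b'}{b^2c}+\frac{c'}{bc^2}\right)\left(\gamma^2\ul^2-\ul'^2\right).
\]
Moreover, $(\theta\ul\ul')'=\theta'\ul\ul'+\theta(\ul'^2-\gamma^2\ul^2)$. With the above $\theta$ the two terms involving $c'$ cancel, so
\[
    \Et'=\frac{b'}{b^2c}\left(\gamma^2\ul^2-\ul'^2\right)+\theta'\,\ul\ul'.
\]

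\textbf{Step 2 (equivalence).} From $2\gamma|\ul||\ul'|\leq G$ and~\eqref{hp:tar-c'} we get
\[
    |\theta\ul\ul'|\leq\frac{|c'|}{2\lambda b c^2}\cdot\frac{G}{bc}\leq\frac{G}{2bc}.
\]
Hence $\frac12 G/(bc)\leq\Et\leq\frac32 G/(bc)$. Hypothesis~\eqref{hp:tar-c'} is used only here, to keep the energy positive definite, and these two bounds produce the constants in front.

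\textbf{Step 3 (differential inequality).} The $b'$-term is bounded by $\frac{b'}{b}\cdot\frac{G}{bc}\leq\frac{2b'}{b}\Et$. Integrated as in~\eqref{eqn:int-b^2}, this yields the factor $b(t)^2/b(\alpha)^2$. For the second term,
\[
    \theta'=\frac{c''}{bc^2}-\frac{2c'^2}{bc^3}-\frac{b'c'}{b^2c^2},
\]
and $|\ul\ul'|\leq G/(2\gamma)\leq\Et/(\lambda c)$. The first two pieces of $\theta'$ therefore contribute at most $\frac{C}{\lambda b}\bigl(\frac{|c''|}{c^2}+\frac{c'^2}{c^3}\bigr)\Et$, which integrates to $\Phit(\lambda,\alpha,\beta)$. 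Converting back, $G(t)\leq 2b(t)c(t)\Et(t)$ and $\Et(\alpha)\leq\frac32 G(\alpha)/(b(\alpha)c(\alpha))$. This gives a factor $c(t)/c(\alpha)$, matching the statement, and a multiplicative constant of order $3$ to $5$.

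\textbf{Main obstacle.} This ansatz leaves two discrepancies with~\eqref{th:est-tar}. First, the normalization by $b$ gives an extra factor $b(t)/b(\alpha)$. Second, there is a leftover term $\frac{b'|c'|}{b^2c^2}|\ul\ul'|$. By~\eqref{hp:tar-c'} this term is at most $\frac{b'}{b}\cdot\frac{G}{2bc}\leq\frac{b'}{b}\Et$, but it worsens the exponent of $b(t)/b(\alpha)$.

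To remove both I would change the normalization. One option is to divide by $c$ only, taking $\Et:=G/c+\frac{c'}{c^2}\ul\ul'$, which is again comparable to $G/c$ by~\eqref{hp:tar-c'}. Its derivative is
\[
    2\lambda^2bb'c\,\ul^2+\theta'\ul\ul'.
\]
The first term is bounded by $\frac{2b'}{b}\cdot\frac{\gamma^2\ul^2}{c}$, and the task is to compare $\gamma^2\ul^2/c$ with $\Et$ without losing a factor~$2$, perhaps using $\frac{\gamma^2\ul^2}{c}\leq\Et+\frac{|c'|}{c^2}|\ul\ul'|$ and absorbing. The second term now has no $b'$ contribution and gives exactly $\Phit$ after using $|\ul\ul'|\leq G/(2\lambda bc)$; the numerical factor $3$ in $\Phit$ leaves room for this. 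I expect this bookkeeping, getting exactly $b(t)^2/b(\alpha)^2$ together with the constant $5$, to be the delicate point. The cancellation itself is routine.
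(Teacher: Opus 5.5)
\textbf{There is a genuine gap}, at exactly the point you call ``delicate''. With your second energy $\Et=\frac{\ul'^2}{c}+\lambda^2b^2c\,\ul^2+\frac{c'}{c^2}\ul\ul'$, your formula $\Et'=2\lambda^2bb'c\,\ul^2+(c'/c^2)'\ul\ul'$ is correct, and so are the bounds $\frac12 G/c\leq\Et\leq\frac32 G/c$. The problem is the $b'$-term. To obtain
\[
2\lambda^2bb'c\,\ul^2\leq\frac{2b'}{b}\Et,
\]
and hence exactly $b(t)^2/b(\alpha)^2$, you need $\lambda^2b^2c\,\ul^2\leq\Et$. This is false, because the form $\frac{\ul'^2}{c}+\frac{c'}{c^2}\ul\ul'$ is indefinite: its minimum over $\ul'$ is $-\frac{c'^2}{4c^3}\ul^2$.

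Under \eqref{hp:tar-c'} the sharp pointwise bound is $\lambda^2b^2c\,\ul^2\leq\frac43\Et$, attained when $|c'|=\lambda bc^2$ and $\ul'=-\frac{c'}{2c}\ul$. This is precisely what your absorption yields, so no bookkeeping closes the gap. You end up with $(b(t)/b(\alpha))^{8/3}$, or $(b(t)/b(\alpha))^{4}$ with the cruder bound $\Et\geq\frac12G/c$.

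The exponent is not cosmetic. In the proof of Theorem~\ref{thm:c2} this estimate is chained into Proposition~\ref{prop:mix}, which requires exactly $b(t)^2/b(\tl)^2$. A power $p>2$ leaves a factor up to $(b(T)/b(\tl))^{p-2}=(\nu_2\lambda\tl b(T))^{p-2}$, that is, an extra derivative loss.

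\textbf{The missing idea is to complete the square} with a diagonal term $\frac{c'^2}{c^3}\ul^2$. The paper uses
\[
\Et=\frac{\ul'^2}{c}+\lambda^2b^2c\,\ul^2+\frac{c'}{c^2}\ul\ul'+\frac{c'^2}{c^3}\ul^2 .
\]
Since $\bigl|\frac{c'}{c^2}\ul\ul'\bigr|\leq\frac12\bigl(\frac{\ul'^2}{c}+\frac{c'^2}{c^3}\ul^2\bigr)$, one gets $\Et\geq\frac12\frac{\ul'^2}{c}+\lambda^2b^2c\,\ul^2$ without any hypothesis. Therefore $2\lambda^2bb'c\,\ul^2\leq\frac{2b'}{b}\Et$, which gives the factor $b(t)^2/b(\alpha)^2$ exactly.

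Hypothesis \eqref{hp:tar-c'} enters the upper bound $\Et\leq\frac32\frac{\ul'^2}{c}+\frac52\lambda^2b^2c\,\ul^2$. Combined with $G\leq 2c\Et$, this produces the constant $5$.

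The new term also simplifies the derivative. Its cross term $\frac{2c'^2}{c^3}\ul\ul'$ cancels the $-\frac{2c'^2}{c^3}\ul\ul'$ contained in $(c'/c^2)'\ul\ul'$, leaving
\[
\Et'=2\lambda^2bb'c\,\ul^2+\frac{c''}{c^2}\ul\ul'+\frac{c'}{c^3}\Bigl(2c''-\frac{3c'^2}{c}\Bigr)\ul^2 .
\]
The last two terms are bounded by $\frac{1}{\lambda b}\frac{|c''|}{c^2}\Et$ and, using \eqref{hp:tar-c'} once more, by $\frac{1}{\lambda b}\bigl(2\frac{|c''|}{c^2}+3\frac{c'^2}{c^3}\bigr)\Et$. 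Together they give exactly $\Phit$ with the factor~$3$.
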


\begin{proof}


In the interval $[\alpha,\beta]$ we consider the Tarama-type energy
\begin{equation}
    \Et(t):=
    \frac{\ul'(t)^2}{c(t)}+
    \lambda^2 b(t)^2 c(t) \ul(t)^2+
    \frac{c'(t)}{c(t)^2}\ul(t)\ul'(t)+
    \frac{c'(t)^2}{c(t)^3}\ul(t)^2.
    \nonumber
\end{equation}

\paragraph{\textmd{\textit{Equivalence with simpler energies}}}

We claim that
\begin{equation}
    \frac{1}{2}\frac{\ul'(t)^2}{c(t)}+\lambda^2 b(t)^2 c(t) \ul(t)^2\leq
    \Et(t)\leq
    \frac{3}{2}\frac{\ul'(t)^2}{c(t)}+
    \frac{5}{2}\lambda^2 b(t)^2 c(t) \ul(t)^2
    \label{equiv:etar}
\end{equation}
for every $t\in[\alpha,\beta]$.

Indeed, from the classical inequality
\begin{equation}
    \left|\frac{c'(t)}{c(t)^2}\ul(t)\ul'(t)\right|\leq
    \frac{1}{2}\left(\frac{\ul'(t)^2}{c(t)}+\frac{c'(t)^2}{c(t)^3}\ul(t)^2\right)
    \nonumber
\end{equation}
we immediately deduce the estimate from below in (\ref{equiv:etar}), and the estimate from above
\begin{equation}
    \Et(t)\leq
    \frac{3}{2}\frac{\ul'(t)^2}{c(t)}+
    \lambda^2 b(t)^2 c(t) \ul(t)^2+
    \frac{3}{2}\frac{c'(t)^2}{c(t)^3}\ul(t)^2.
    \label{est:tar-1}
\end{equation}

On the other hand, from assumption (\ref{hp:tar-c'}) we know that
\begin{equation}
    \frac{c'(t)^2}{c(t)^3}\leq
    \lambda^2 b(t)^2 c(t).
    \nonumber
\end{equation}

Plugging this inequality into (\ref{est:tar-1}) we obtain the estimate from above in (\ref{equiv:etar}).

\paragraph{\textmd{\textit{Differential inequality}}}

We claim that
\begin{equation}
    \Et(t)\leq\frac{b(t)^2}{b(\alpha)^2}\cdot
    \exp\left(\Phit(\lambda,\alpha,\beta)\right)\cdot
    \Et(\alpha)
    \qquad
    \forall t\in[\alpha,\beta].
    \label{est:etar}
\end{equation}

In order to prove (\ref{est:etar}), we start by computing the time-derivative of $\Et$, which turns out to be
\begin{eqnarray}
    \Et'(t) & = &
    2\lambda^2 b(t)b'(t)c(t)\ul(t)^2+
    \frac{c''(t)}{c(t)^2}\ul(t)\ul'(t)+
    \frac{c'(t)}{c(t)^3}\left(2c''(t)-\frac{3c'(t)^2}{c(t)}\right)\ul(t)^2
    \nonumber
    \\
    & =: &
    L_1(t)+L_2(t)+L_3(t).
    \label{est:tar-L123}
\end{eqnarray}

Let us estimate the three terms. Since $b(t)>0$ and $b'(t)\geq 0$, from the estimate from below in (\ref{equiv:etar}) we deduce that
\begin{equation}
    L_1(t)=
    \frac{2b'(t)}{b(t)}\cdot\lambda^2 b(t)^2 c(t) \ul(t)^2\leq
    \frac{2b'(t)}{b(t)}\cdot\Et(t).
    \label{est:tar-L1}
\end{equation}

As for the second term, we observe that
\begin{equation}
    L_2(t)\leq\frac{1}{\lambda b(t)}\cdot\frac{|c''(t)|}{c(t)^2 }\cdot\frac{1}{2}
    \left(\frac{\ul'(t)^2}{c(t)}+\lambda^2 b(t)^2 c(t) \ul(t)^2\right),
    \nonumber
\end{equation}
and, hence, applying again the estimate from below in (\ref{equiv:etar}), we deduce that
\begin{equation}
    L_2\leq\frac{1}{\lambda b(t)}\cdot\frac{|c''(t)|}{c(t)^2}\cdot\Et(t).
    \label{est:tar-L2}
\end{equation}

Finally, applying once more the estimate from below in (\ref{equiv:etar}), and recalling (\ref{hp:tar-c'}), we obtain that
\begin{eqnarray}
    L_3(t) & \leq &
    \frac{|c'(t)|}{c(t)^4}\frac{1}{\lambda^2 b(t)^2}
    \left(2|c''(t)|+3\frac{c'(t)^2}{c(t)}\right)
    \lambda^2 b(t)^2 c(t) \ul(t)^2
    \nonumber
    \\
    & \leq &
    \frac{1}{\lambda b(t)}
    \left(2\frac{|c''(t)|}{c(t)^2}+3\frac{c'(t)^2}{c(t)^3}\right)\cdot
    \Et(t).
    \label{est:tar-L3}
\end{eqnarray}

Plugging~\eqref{est:tar-L1}, \eqref{est:tar-L2}, and~\eqref{est:tar-L3} into~\eqref{est:tar-L123}, we conclude that
\begin{equation}
    \Et'(t)\leq
    \frac{2b'(t)}{b(t)}\cdot\Et(t)+
    \frac{3}{\lambda b(t)}
    \left(\frac{|c''(t)|}{c(t)^2}+\frac{c'(t)^2}{c(t)^3}\right)\cdot
    \Et(t)
    \nonumber
\end{equation}
for every $t\in[\alpha,\beta]$. Integrating this differential inequality in $[\alpha,t]$, and recalling (\ref{eqn:int-b^2}), we obtain exactly (\ref{est:etar}).

\paragraph{\textmd{\textit{Conclusion}}}

From the estimate from below in~\eqref{equiv:etar} we deduce that
\begin{equation}
    \ul'(t)^2+\lambda^2 b(t)^2 c(t)^2\ul(t)^2
    \leq
    2c(t)\Et(t),
    \nonumber
\end{equation}
while from the estimate from above we deduce that
\begin{equation}
    \Et(\alpha)
    \leq
    \frac{5}{2c(\alpha)}
    \left(\ul'(\alpha)^2+
    \lambda^2 b(\alpha)^2 c(\alpha)^2\ul(\alpha)^2\right).
    \nonumber
\end{equation}
Combining the last two inequalities with~\eqref{est:etar}, we conclude that
\begin{eqnarray}
    \ul'(t)^2+\lambda^2 b(t)^2 c(t)^2\ul(t)^2
    & \leq &
    5\frac{c(t)}{c(\alpha)}\cdot
    \frac{b(t)^2}{b(\alpha)^2}\cdot
    \exp\left(\Phit(\lambda,\alpha,\beta)\right)
    \nonumber
    \\
    & &
    \cdot
    \left(\ul'(\alpha)^2+
    \lambda^2 b(\alpha)^2 c(\alpha)^2\ul(\alpha)^2\right),
    \nonumber
\end{eqnarray}
which is exactly~\eqref{th:est-tar}.
\end{proof}

The last result is a gluing statement.  Its purpose is to combine an estimate before the threshold, typically obtained by means of a Kowalevskian energy, with an estimate after the threshold, obtained for instance by means of a hyperbolic or Tarama-type energy. The conclusion is a global estimate in the standard energy.

\begin{prop}[Gluing estimates in two subintervals]\label{prop:mix}

Let $T$, $\nu_1$, and $\nu_2$ be three positive real numbers with $\nu_1\leq\nu_2$. Let $b:[0,T]\to[0,+\infty)$ be a nondecreasing continuous function, let $\lambda\geq 0$ be a real number, and let $\tl$ be defined by~\eqref{defn:tl-nu2}.

Let $v:[0,T]\to\re$ be a function of class $C^1$ with the following two properties.
\begin{enumerate}
\renewcommand{\labelenumi}{(\roman{enumi})}

    \item \emph{(Estimate before the threshold)} There exist two real numbers $H_1\geq 2$ and $d_0\in[\nu_1,\nu_2]$ such that
    \begin{equation}
        v'(t)^2+\lambda^2 b(\tl)^2 d_0^2\, v(t)^2\leq
        H_1\left[v'(0)^2+\lambda^2 b(\tl)^2 d_0^2\, v(0)^2\right]
        \label{hp:est-kow}
    \end{equation}
    for every $t\in[0,\tl]$.

    \item  \emph{(Estimate after the threshold)} There exists a real number $H_2\geq 1$ such that, if $\tl<T$, then there exists a function $d:[\tl,T]\to[\nu_1,\nu_2]$ with $d(\tl)=d_0$ and
    \begin{equation}
        v'(t)^2+\lambda^2 b(t)^2 d(t)^2 v(t)^2\leq
        H_2\frac{b(t)^2}{b(\tl)^2}\cdot
        \left[v'(\tl)^2+\lambda^2 b(\tl)^2 d(\tl)^2 v(\tl)^2\right]
        \label{hp:est-hyp}
    \end{equation}
    for every $t\in[\tl,T]$.

\end{enumerate}

Then the following global estimate holds for every $t\in[0,T]$:
\begin{equation}
    v'(t)^2+\lambda^2 v(t)^2\leq
    3\left(\frac{\nu_2}{\nu_1}\right)^2H_1 H_2\cdot
    \left(1+\nu_2^2 b(T)^2\right)\cdot
    \left[(1+\lambda^2 \tl^2)\cdot v'(0)^2+\lambda^2 v(0)^2\right].
    \label{th:est-kow-hyp}
\end{equation}

\end{prop}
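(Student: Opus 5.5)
We split into the two regimes $t\in[0,\tl]$ and $t\in[\tl,T]$, and in each regime compare the special energies of (i) and (ii) with the standard energy $v'(t)^2+\lambda^2v(t)^2$.

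\emph{First regime.} The key quantitative fact is (\ref{est:b-tl}), which holds by continuity of $b$: $\nu_2\lambda\tl b(\tl)\le 1$, so that $\lambda^2 b(\tl)^2d_0^2\le 1/\tl^2$.

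For $t\in[0,\tl]$, hypothesis (\ref{hp:est-kow}) controls only $v'(t)^2$ and the weakly weighted term $\lambda^2b(\tl)^2d_0^2v(t)^2$. The weight in front of $v$ may be tiny, so we do not use it to control $\lambda^2v(t)^2$. Instead we write
\[
v(t)=v(0)+\int_0^t v'(\tau)\,d\tau,
\]
which gives $\lambda^2v(t)^2\le 2\lambda^2v(0)^2+2\lambda^2\tl^2\sup_{[0,\tl]}v'^2$. Then we bound $\sup v'^2$ by (\ref{hp:est-kow}):
\[
v'(t)^2\le H_1\left[v'(0)^2+\lambda^2b(\tl)^2\nu_2^2v(0)^2\right].
\]
The key point is that the term $\lambda^2\tl^2\cdot\lambda^2b(\tl)^2\nu_2^2v(0)^2$ is at most $\lambda^2v(0)^2$ by (\ref{est:b-tl}). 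Likewise $\lambda^2b(\tl)^2\nu_2^2\le \lambda^2\nu_2^2 b(T)^2$. Collecting terms, we get
\[
v'(t)^2+\lambda^2v(t)^2\le C H_1(1+\nu_2^2b(T)^2)\left[(1+\lambda^2\tl^2)v'(0)^2+\lambda^2v(0)^2\right],
\]
with $C\le 3$. The condition $H_1\ge2$ is used to absorb the lone term $2\lambda^2v(0)^2$ without an extra constant.

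\emph{Second regime.} This case arises only if $\tl<T$. From (\ref{hp:est-hyp}) we get, for $t\ge\tl$,
\[
v'(t)^2+\lambda^2b(t)^2\nu_1^2v(t)^2\le H_2\frac{b(t)^2}{b(\tl)^2}\left[v'(\tl)^2+\lambda^2b(\tl)^2d_0^2v(\tl)^2\right].
\]
The bracket is bounded by $H_1$ times the initial Kowalevskian energy by (\ref{hp:est-kow}) at $t=\tl$. Dividing by $b(t)^2/b(\tl)^2\ge1$ is not needed for $v'$. For $v$ we use the lower bound coming from $t>\tl$: for $t>\tl$ the definition of $\tl$ gives $\nu_2\lambda tb(t)>1$, so $b(t)$ is not small relative to $1/(\lambda t)$. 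Hence
\[
\lambda^2v(t)^2\le \frac{1}{b(t)^2\nu_1^2}\cdot H_2\frac{b(t)^2}{b(\tl)^2}[\dots]=\frac{H_2}{\nu_1^2b(\tl)^2}[\dots].
\]
The bracket contains $v'(0)^2$ and $\lambda^2b(\tl)^2d_0^2v(0)^2$, multiplied by $H_1$. Dividing by $b(\tl)^2$ turns the second piece into $\lambda^2\nu_2^2v(0)^2/\nu_1^2$, which is fine.

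The first piece, $v'(0)^2/b(\tl)^2$, is the main obstacle, since $b(\tl)$ may be tiny or even zero. To handle it we would use $1/b(\tl)^2\le \nu_2^2\lambda^2\tl^2\cdot(\ldots)$. Since $\tl$ is a supremum, $b(t)\ge 1/(\nu_2\lambda t)$ only for $t>\tl$, so by continuity $b(\tl)\ge 1/(\nu_2\lambda\tl)$ whenever $\tl<T$. Combined with (\ref{est:b-tl}) this gives equality, $\nu_2\lambda\tl b(\tl)=1$. Therefore
\[
\frac{1}{\nu_1^2b(\tl)^2}=\frac{\nu_2^2}{\nu_1^2}\lambda^2\tl^2,
\]
and the term $v'(0)^2$ acquires exactly the factor $\lambda^2\tl^2(\nu_2/\nu_1)^2$. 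This also covers $b(\tl)>0$ automatically.

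The $v'(t)^2$ part is bounded by $H_1H_2\,b(T)^2/b(\tl)^2\cdot[\dots]$. Using $1/b(\tl)^2=\nu_2^2\lambda^2\tl^2$ again, it becomes $H_1H_2\nu_2^2b(T)^2\lambda^2\tl^2 v'(0)^2$ plus $H_1H_2\nu_2^2b(T)^2\lambda^2\nu_2^2v(0)^2/\nu_2^2$, which is of the required form. Summing the two contributions and tracking constants, with total factor at most $3(\nu_2/\nu_1)^2H_1H_2(1+\nu_2^2b(T)^2)$, yields (\ref{th:est-kow-hyp}).
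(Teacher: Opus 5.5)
Your proposal is correct and follows essentially the same route as the paper. On $[0,\tl]$ you control $v$ by integrating $v'$, and you absorb the weighted term using $\nu_2\lambda\tl b(\tl)\le 1$ and $H_1\ge 2$. On $[\tl,T]$ you use the equality $\nu_2\lambda\tl b(\tl)=1$ to convert $v'(0)^2/b(\tl)^2$ into $\nu_2^2\lambda^2\tl^2v'(0)^2$, while $b(t)^2$ cancels in the $\lambda^2v(t)^2$ term. Your separate bounds on $v'(t)^2$ and $\lambda^2v(t)^2$ are just an unpacked form of the paper's factor $b(t)^2d(t)^2/(1+b(t)^2d(t)^2)$, and the lower bound $\nu_2\lambda tb(t)>1$ for $t>\tl$ is not actually needed, since only $b(t)\ge b(\tl)>0$ is used.
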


\begin{proof}

Let us set, for simplicity,
\begin{equation}
    E_0:=v'(0)^2+\lambda^2 b(\tl)^2 d_0^2\, v(0)^2.
    \nonumber
\end{equation}
From the estimate before the threshold, namely~\eqref{hp:est-kow}, we deduce that
\begin{equation}
    v'(t)^2\leq
    H_1 E_0
    \qquad
    \forall t\in[0,\tl].
    \label{est:kow-v'}
\end{equation}

We now turn this estimate on $v'$ into an estimate on $v$. Since
\begin{equation}
|v(t)|\leq
|v(0)|+\int_{0}^{t}|v'(s)|\,ds,
    \nonumber
\end{equation}
by squaring this inequality and applying H\"older's inequality, we obtain that
\begin{equation}
v(t)^{2}\leq
2v(0)^{2}+2t\int_{0}^{t}|v'(s)|^{2}\,ds.
    \nonumber
\end{equation}
Thus, from~\eqref{est:kow-v'}, we deduce that
\begin{equation}
v(t)^{2}\leq 2v(0)^{2}+2 H_1 E_0 \cdot t^{2}
\qquad
\forall t\in[0,\tl].
\label{est:kow-v}
\end{equation}

Combining~\eqref{est:kow-v'} and~\eqref{est:kow-v}, we conclude that, for every $t\in[0,\tl]$,
\begin{eqnarray*}
v'(t)^{2}+\lambda^{2}v(t)^{2} & \leq &
H_1 E_0 +\lambda^{2}\left(2v(0)^{2}+2 H_1 E_0 t^{2}\right)
\\
& = &
H_1(1+2\lambda^{2}t^{2})\cdot v'(0)^{2}
\\
& &
+\left[2+H_1 b(\tl)^2 d_0^2+
2H_1 \lambda^{2} b(\tl)^2d_0^2\, t^2\right]
\cdot\lambda^{2}v(0)^{2}.
\end{eqnarray*}
Since $d_0\leq\nu_2$, from the definition of $\tl$ we know that
\begin{equation}
    \lambda^2 b(\tl)^2d_0^2\, t^2\leq
    \left(\nu_2\lambda\tl b(\tl)\strut\right)^2\leq
    1
    \qquad
    \forall t\in[0,\tl].
    \nonumber
\end{equation}
Recalling that $H_1\geq 2$, we obtain
\begin{equation}
    v'(t)^{2}+\lambda^{2}v(t)^{2}\leq
    H_1\left[(1+2\lambda^{2}t^{2})\cdot v'(0)^{2}+
    \left(3+\nu_2^2b(\tl)^{2}\right)\cdot\lambda^{2}v(0)^{2}\right].
    \nonumber
\end{equation}
Finally, since $t\leq\tl$ and $b(\tl)\leq b(T)$, we have
\[
    1+2\lambda^2t^2\leq 3(1+\lambda^2\tl^2)
    \qquad\text{and}\qquad
    3+\nu_2^2b(\tl)^2\leq 3(1+\nu_2^2b(T)^2).
\]
Taking also into account that $H_2\geq1$ and $\nu_2/\nu_1\geq1$, we conclude that \eqref{th:est-kow-hyp} is true for every $t\in[0,\tl]$.

If $\tl=T$, then there is nothing else to prove. Let us therefore assume that $\tl<T$. Since $b$ is continuous, the definition of $\tl$ gives
\begin{equation}
\nu_2\lambda\tl b(\tl)=1.
\label{eq:b-tl=1}
\end{equation}
In particular $b(\tl)>0$, and by monotonicity also $b(t)>0$ for every $t\in[\tl,T]$.

In the interval $[\tl,T]$ we use the estimate after the threshold, namely~\eqref{hp:est-hyp}, together with~\eqref{hp:est-kow} evaluated at $t=\tl$. Since $d(\tl)=d_0$, we obtain
\begin{eqnarray*}
\frac{b(t)^{2}d(t)^2}{1+b(t)^{2}d(t)^2}
\left(v'(t)^{2}+\lambda^{2}v(t)^{2}\right) & \leq &
v'(t)^{2}+\lambda^{2}b(t)^{2}d(t)^2 v(t)^{2}
\\
& \leq &
H_2\frac{b(t)^{2}}{b(\tl)^2}
\left(v'(\tl)^{2}+\lambda^{2}b(\tl)^2 d(\tl)^2 v(\tl)^{2}\right)
\\
& \leq &
H_1H_2\frac{b(t)^{2}}{b(\tl)^2}
\left(v'(0)^{2}+\lambda^{2}b(\tl)^2 d_0^2\, v(0)^{2}\right)
\\
& = &
H_1H_2\, b(t)^{2}
\left(\frac{v'(0)^{2}}{b(\tl)^{2}}+\lambda^{2}d_0^2\, v(0)^{2}\right),
\end{eqnarray*}
and as a consequence
\begin{equation}
    v'(t)^{2}+\lambda^{2}v(t)^{2}\leq
    H_1H_2\frac{1+b(t)^2d(t)^2}{d(t)^2}
    \left(\frac{v'(0)^{2}}{b(\tl)^{2}}+\lambda^{2}d_0^2\, v(0)^{2}\right).
    \nonumber
\end{equation}
Using~\eqref{eq:b-tl=1} and the inequalities
\[
    d_0\leq\nu_2,\qquad
    d(t)\geq\nu_1,\qquad
    b(t)\leq b(T),\qquad
    d(t)\leq\nu_2,
\]
we conclude that
\begin{equation}
    v'(t)^{2}+\lambda^{2}v(t)^{2}\leq
    H_1H_2
    \left(\frac{\nu_2}{\nu_1}\right)^2
    \left(1+\nu_2^2 b(T)^2\right)
    \left[\lambda^{2}\tl^{2}v'(0)^{2}+\lambda^{2}v(0)^{2}\right].
    \nonumber
\end{equation}
This is stronger than~\eqref{th:est-kow-hyp}, and therefore the proof is complete.
\end{proof}

\setcounter{equation}{0}
\section{Proofs of the main results and corollaries}\label{sec:proofs}

In this section we prove the main results stated in Section~\ref{sec:statements}. The proofs are obtained by combining the energy estimates of Section~\ref{sec:energy}, with suitable choices of the auxiliary coefficients and of the frequency-dependent splitting points.

The estimates of Section~\ref{sec:energy} have been proved under additional regularity assumptions on the monotone factor $b$. We now show that this entails no loss of generality. More precisely, we prove that any estimate obtained under the additional assumption that $b$ is smooth and nondecreasing extends to an arbitrary nondecreasing function $b$, by approximating $b$ from above with smooth nondecreasing functions.

Besides the convergence of the corresponding solutions, this approximation requires some care only in the frequency-dependent quantities: the thresholds and the minima defining the functions $\Phi_i$ have to pass to the limit. Once this reduction argument is established, in the proofs of the main results we shall freely assume that $b$ has all the additional regularity needed in order to apply the estimates of Section~\ref{sec:energy}.

\subsection{Approximation of the monotone factor}\label{sec:approx}

\paragraph{\textmd{\textit{Approximation of $b$}}}

We extend $b$ to the half-line $[0,+\infty)$ by setting $b(t)=b(T)$ for every $t\geq T$, and then for every $\ep>0$ we consider the convolution
\begin{equation}
    b_\ep(t):=\int_0^1 b(t+\ep s)\rho(s)\,ds
    \qquad
    \forall t\geq 0,
    \nonumber
\end{equation}
where $\rho$ is a smooth mollifier with support in $(0,1)$ and integral equal to~$1$. It is immediate that $b_\ep$ is smooth and nondecreasing, and satisfies
\begin{equation}
    b_\ep(t)\geq b(t)
    \qquad
    \forall t\in[0,T],
    \nonumber
\end{equation}
with equality at least when $t=T$ because of the extension beyond $T$. Moreover, $b_\ep$ converges to $b$ both in the sense that $b_\ep\to b$ in $L^1((0,T))$, and in the sense that
\begin{equation}
    \lim_{\ep\to 0^+}b_\ep(t)=b(t)
    \label{bep-pointwise}
\end{equation}
for every continuity point $t$ of $b$, namely for every $t\in[0,T]$ with the possible exception of a countable set of points.

\paragraph{\textmd{\textit{Convergence of solutions}}}

We now fix $\lambda$ and a solution $\ul$ to~(\ref{eqn:main-ode-c}). Let $u_{\lambda,\ep}$ be the solution to the same equation with $b_\ep$ in place of $b$, and with the same initial data as $\ul$. Since $b_\ep\to b$ in $L^1((0,T))$ and the functions $b_\ep$ are uniformly bounded, the coefficients $b_\ep(t)^2c(t)^2$ converge to $b(t)^2c(t)^2$ in $L^1((0,T))$. Therefore, the classical continuous-dependence result for linear ordinary differential equations gives
\[
    u_{\lambda,\ep}\to\ul
    \qquad
    \text{in }C^1([0,T])
    \quad\text{as }\ep\to0^+.
\]
This is enough to pass to the limit in the left-hand sides of~(\ref{th:b(t)-above}), (\ref{th:C0-theory}), (\ref{th:C1-theory}), and~(\ref{th:C2-theory}).

\paragraph{\textmd{\textit{Approximation of thresholds and minimum values}}}

Let us now consider the corresponding right-hand sides, which involve quantities such as $b_\ep(T)$, $t_{\lambda,\ep}$, $A_{\lambda,\ep}$, $\Phi_{0,\ep}(\lambda,T)$, $\Phi_{1,\ep}(\lambda,T)$, $\Phi_{2,\ep}(\lambda,T)$, defined as in Section~\ref{sec:statements}, just with $b_\ep$ instead of $b$. We claim that, as $\ep\to 0^+$,
\begin{equation}
    b_\ep(T)\to b(T),
    \qquad\quad
    t_{\lambda,\ep}\to\tl,
    \qquad\quad
    A_{\lambda,\ep}\to A_\lambda,
    \label{eqn:lim-ep-bt}
\end{equation}
and
\begin{equation}
    \Phi_{i,\ep}(\lambda,T)\to\Phi_{i}(\lambda,T)
    \qquad
    \forall i\in\{0,1,2\}.
    \label{eqn:lim-ep-Phi}
\end{equation}

Once these convergences are established, the estimates satisfied by the approximate solutions $u_{\lambda,\ep}$ pass to the limit and yield the desired estimates for the solution $\ul$ of the original equation.

The first convergence in (\ref{eqn:lim-ep-bt}) is trivial because actually $b_\ep(T)=b(T)$ for every $\ep>0$.

\paragraph{\textmd{\textit{Convergence of $t_{\lambda,\ep}$}}}

For simplicity, we present the argument in the case where $\tl$ is defined by~(\ref{defn:tl}); the case where it is defined by~(\ref{defn:tl-nu2}) is completely analogous. Since $b_\ep\geq b$ in $[0,T]$, the definition of the threshold gives $t_{\lambda,\ep}\leq\tl$ for every $\ep>0$, and hence,
\begin{equation}
    \limsup_{\ep\to0^+}t_{\lambda,\ep}\leq\tl.
    \label{est:limsup-tleps}
\end{equation}
Conversely, let $t<\tl$ be a continuity point of $b$. Then $\lambda t b(t)<1$, and by the pointwise convergence~(\ref{bep-pointwise}) we have $\lambda t b_\ep(t)<1$ for every $\ep$ small enough. Hence, $t<t_{\lambda,\ep}$ for every $\ep$ small enough, and therefore,
\begin{equation}
    \liminf_{\ep\to0^+}t_{\lambda,\ep}\geq t.
    \nonumber
\end{equation}
Letting $t\to\tl^-$ along continuity points of $b$, which are dense in $[0,T]$, we obtain
\begin{equation}
    \liminf_{\ep\to0^+}t_{\lambda,\ep}\geq\tl.
    \label{est:liminf-tleps}
\end{equation}
Combining~(\ref{est:limsup-tleps}) and~(\ref{est:liminf-tleps}), we conclude that $t_{\lambda,\ep}\to\tl$.

\paragraph{\textmd{\textit{Convergence of $A_{\lambda,\ep}$}}}

Since $b_\ep\geq b$ in $[0,T]$, and $t_{\lambda,\ep}\leq\tl$, it follows that the approximate bad set is contained in the original bad set, and hence
\begin{equation}
    \limsup_{\ep\to0^+}A_{\lambda,\ep}\leq A_\lambda.
    \label{est:limsup-Aleps}
\end{equation}

Let us prove the opposite inequality. If the bad set
\[
    \left\{t\in[\tl,T]:
    |c'(t)|>\lambda b(t)c(t)^2
    \right\}
\]
is empty, then $A_\lambda=\tl$, and the conclusion follows from
$t_{\lambda,\ep}\leq A_{\lambda,\ep}$ and $t_{\lambda,\ep}\to\tl$.

Otherwise, let $t$ be a point of the bad set. Since the inequality
\[
    |c'(t)|>\lambda b(t)c(t)^2
\]
is strict, and since $c$ and $c'$ are continuous while $b$ is nondecreasing,
the same inequality holds in a left neighbourhood of $t$. Indeed, moving
slightly to the left, the quantities involving $c$ change continuously, while
$b$ can only decrease. Hence, this left neighbourhood contains continuity
points $\sigma$ of $b$ which still belong to the bad set.

For every such point $\sigma$, the pointwise convergence
$b_\ep(\sigma)\to b(\sigma)$ gives
\[
    |c'(\sigma)|>\lambda b_\ep(\sigma)c(\sigma)^2
\]
for every $\ep$ small enough. Thus $\sigma\leq A_{\lambda,\ep}$ for every
$\ep$ small enough. Letting first $\sigma$ approach $t$ from the left, and
then letting $t\to A_\lambda^-$ along points of the bad set, we obtain
\begin{equation}
    \liminf_{\ep\to0^+}A_{\lambda,\ep}\geq A_\lambda.
    \nonumber
\end{equation}
Together with~(\ref{est:limsup-Aleps}), this proves that $A_{\lambda,\ep}\to A_\lambda$.

\paragraph{\textmd{\textit{Convergence of minimum values}}}

Finally, let us consider the limits in~(\ref{eqn:lim-ep-Phi}). The case $i=0$ is immediate from~(\ref{defn:Phi0}), because $b_\ep\to b$ in $L^1((0,T))$.

For the cases $i=1$ and $i=2$, we use the following fact, which follows from a standard compactness argument. Assume that $\{f_n\}$ is a sequence of functions, and $\{K_n\}$ is a sequence of compact sets in some metric space. Assume that
\begin{itemize}
\item for every $n$ the function $f_n$ attains its minimum in $K_n$,

\item the sequence $\{K_n\}$ converges to a compact set $K_\infty$ in the Hausdorff sense,

\item the sequence $\{f_n\}$ converges to a function $f_\infty:K_\infty\to\re$ in the sense that $f_n(x_n)\to f_\infty(x_\infty)$ for every sequence $x_n\to x_\infty$ with $x_n\in K_n$ for each $n$.
\end{itemize}
Then one has convergence of the minimum values, namely
\[
    \min\{f_n(x):x\in K_n\}\to
    \min\{f_\infty(x):x\in K_\infty\}.
\]

We apply this result to the functions $\psi_{1,\lambda,\ep}(s)$ and $\psi_{2,\lambda,\ep}(s,a)$ defined in analogy with~(\ref{defn:psi-1}) and~(\ref{defn:psi-2}). The sets where the minimum values are computed are intervals in the case $i=1$, and polygons in the case $i=2$, and in both cases the vertices are converging to the vertices of the limit polygon. As for the functions, in the case of $\psi_{1,\lambda,\ep}$ and of the first two terms of $\psi_{2,\lambda,\ep}$ the required convergence along convergent sequences of points in the moving domains follows from the $L^1$ convergence of $b_\ep$ to $b$, from the convergence $t_{\lambda,\ep}\to\tl$, and from the continuity of the oscillation with respect to $s$.

In the case of the third term in the definition of $\psi_{2,\lambda,\ep}$, from the definition of $t_{\lambda,\ep}$ and the monotonicity of $t\mapsto tb_\ep(t)$ we deduce that
\[
    \frac{1}{b_\ep(t)}\leq \nu_2\lambda t\leq\nu_2\lambda T
    \qquad
    \forall t\in(t_{\lambda,\ep},T).
\]
This provides a uniform bound from above for the factor $1/b_\ep(t)$ in the third term of $\psi_{2,\lambda,\ep}$. Since the remaining factor involving $c'$ and $c''$ is continuous, hence bounded, on the relevant compact subintervals of $(0,T]$, the convergence of the third term follows by dominated convergence, taking also into account the convergence of the endpoints.

This proves the convergence of minimum values also for $i=1$ and $i=2$.
\qed


\subsection{Proof of Theorem~\ref{thm:b-monotone}}

In this proof there are no oscillations to be controlled, and therefore, the only effect to be estimated is the one produced by the monotone factor $b(t)$. We first obtain the estimate from above by combining the Kowalevskian estimate before the threshold with the hyperbolic estimate after the threshold. Then we show that the factor $\lambda^2\tl^2$ in the loss of regularity cannot be improved in general.

\paragraph{\textmd{\textit{Estimate from above}}}

To begin with, we apply Proposition~\ref{prop:kow} in the interval $[0,\tl]$ with
\[
    \nu_1=d_0=\nu_2:=1,
    \qquad
    c(t)\equiv 1.
\]
We obtain
\begin{equation}
    \ul'(t)^2+\lambda^2 b(\tl)^2 \ul(t)^2\leq
    \exp(1)\cdot\left[\ul'(0)^2+\lambda^2 b(\tl)^2 \ul(0)^2\right]
    \qquad
    \forall t\in[0,\tl].
    \label{est:b-kow}
\end{equation}

If $\tl<T$, then we apply Proposition~\ref{prop:hyp} with
\begin{equation}
    [\alpha,\beta]:=[\tl,T],
    \qquad\quad
    \nu_1=\nu_2:=1,
    \qquad\quad
    d(t)\equiv 1.
    \nonumber
\end{equation}
In this case the quantity $\Phih(\lambda,\tl,T)$ defined in~\eqref{defn:Phih} is equal to~$0$, and therefore
\begin{equation}
    \ul'(t)^2+\lambda^2 b(t)^2 \ul(t)^2\leq
    \frac{b(t)^2}{b(\tl)^2}\cdot
    \left[\ul'(\tl)^2+\lambda^2 b(\tl)^2 \ul(\tl)^2\right]
    \qquad
    \forall t\in[\tl,T].
    \label{est:b-hyp}
\end{equation}

Due to~\eqref{est:b-kow} and~\eqref{est:b-hyp}, the function $v(t):=\ul(t)$ satisfies the assumptions of Proposition~\ref{prop:mix} with
\[
    H_1:=\exp(1),
    \qquad
    H_2:=1,
    \qquad
    \nu_1=d_0=\nu_2:=1,
    \qquad
    d(t)\equiv 1.
\]
With these choices, estimate~\eqref{th:est-kow-hyp} gives
\begin{equation}
    \ul'(t)^2+\lambda^2 \ul(t)^2\leq
    3\exp(1)\cdot\left(1+b(T)^2\right)\cdot
    \left[(1+\lambda^2 \tl^2)\cdot\ul'(0)^2+\lambda^2\ul(0)^2\right]
    \nonumber
\end{equation}
for every $t\in[0,T]$, which is exactly~\eqref{th:b(t)-above}.

\paragraph{\textmd{\textit{Optimality}}}

Let $\ul=\ul(t)$ denote the solution to~\eqref{eqn:main-ode} with initial data
\begin{equation}
\ul(0)=0,
\qquad\qquad
\ul'(0)=1.
\label{eqn:optimality-data}
\end{equation}
We claim that
\begin{equation}
0\leq\ul(t)\leq\tl
    \qquad\text{and}\qquad
    \frac{1}{2}\leq\ul'(t)\leq 1
    \qquad
    \forall t\in[0,\tl/2].
    \label{claim:optimality}
\end{equation}

To this end, we set
\begin{equation}
    S:=\sup\left\{s\in[0,\tl/2]:
    \forall t\in[0,s]\quad
    0\leq\ul(t)\leq\tl
    \text{ and }
    1/2\leq\ul'(t)\leq 3/2\strut
    \right\}.
    \nonumber
\end{equation}
Thus proving~\eqref{claim:optimality} is equivalent to showing that $S=\tl/2$.

The initial conditions~\eqref{eqn:optimality-data} guarantee that the inequalities in~\eqref{claim:optimality} are true in a right neighborhood of $0$, and hence, $S>0$. Moreover, by the definition of $S$ we have
\begin{equation}
    0\leq\ul(s)\leq\tl
    \qquad\text{and}\qquad
    \frac{1}{2}\leq\ul'(s)\leq\frac{3}{2}
    \qquad
    \forall s\in[0,S].
    \label{est:S}
\end{equation}

Assume by contradiction that $S<\tl/2$. By the maximality of $S$, one of the following alternatives must occur:
\begin{equation}
    \ul(S)=\tl
    \qquad\text{or}\qquad
    \ul'(S)=\frac{1}{2}.
    \label{opt-alternative}
\end{equation}
Indeed, the other two possibilities, namely $\ul(S)=0$ and $\ul'(S)=3/2$, are ruled out by the fact that $\ul$ is increasing and $\ul'$ is nonincreasing in $[0,S]$.

Since $\ul'$ is nonincreasing in $[0,S]$, we deduce that
\begin{equation}
    \ul(S)=\int_{0}^{S}\ul'(s)\,ds\leq
    S<\tl,
    \nonumber
\end{equation}
which rules out the first possibility in~\eqref{opt-alternative}.

Similarly, from~\eqref{est:S} and from the definition of $\tl$ we obtain, for every $s\in[0,S]$,
\begin{equation}
    \ul''(s)=
    -\lambda^{2}b(s)^{2}\ul(s)\geq
    -\lambda^{2}b(\tl)^{2}\tl\geq
    -\frac{1}{\tl}.
    \nonumber
\end{equation}
Therefore
\begin{equation}
    \ul'(S)=
    1+\int_{0}^{S}\ul''(s)\,ds\geq
    1-\frac{S}{\tl}>
    \frac{1}{2},
    \nonumber
\end{equation}
which rules out also the second possibility in~\eqref{opt-alternative}. This proves~\eqref{claim:optimality}.

Once we know that $\ul'(t)\geq 1/2$ for every $t\in[0,\tl/2]$, we deduce
\begin{equation}
\ul(\tl/2)=
\int_{0}^{\tl/2}\ul'(s)\,ds\geq
\tl/4.
    \nonumber
\end{equation}
At this point, if~\eqref{hp:optimality} holds for every solution to~\eqref{eqn:main-ode} and every $t\in[0,T]$, then it holds in particular for the solution $\ul$ with initial data~\eqref{eqn:optimality-data} and for $t=\tl/2$. Hence,
\begin{equation}
    \frac{\lambda^{2}\tl^{2}}{16}\leq
    \ul'(\tl/2)^{2}+\lambda^{2}\ul(\tl/2)^{2}\leq
    c_\lambda,
    \nonumber
\end{equation}
which proves~\eqref{th:b(t)-optimal}.
\qed

\subsection{Proof of Theorem~\ref{thm:c0}}

The proof is based on the same gluing argument as in the purely degenerate case. The only additional ingredient is a regularization of the oscillating factor $c$ at scale $1/\lambda$, which provides the auxiliary coefficient used in the hyperbolic energy.

To this end, we extend $c$ to the whole half-line $[0,+\infty)$ by setting
\begin{equation}
    c(t):=c(T)
    \qquad
    \forall t\geq T.
    \nonumber
\end{equation}
For every $\lambda>0$ we introduce the local average
\begin{equation}
    \cl(t):=\lambda\int_{t}^{t+1/\lambda}c(s)\,ds
    \qquad
    \forall t\geq 0.
    \nonumber
\end{equation}

From the strict hyperbolicity assumption~(\ref{hp:c-sh}) it follows that
\begin{equation}
    \nu_1\leq\cl(t)\leq\nu_2
    \qquad
    \forall t\geq 0.
    \label{est:gal-sh}
\end{equation}
Moreover, from the oscillation control~(\ref{defn:c-omega}) we obtain
\begin{equation}
    |\cl'(t)|\leq\lambda\,\omega(1/\lambda)
    \qquad
    \forall t\geq 0,
    \label{est:gal'}
\end{equation}
and
\begin{equation}
    \left|\cl(t)-c(t)\right|\leq\omega(1/\lambda)
    \qquad
    \forall t\geq 0.
    \label{est:gal-c}
\end{equation}

Let now $\ul$ be any solution to~\eqref{eqn:main-ode-c}. In the interval $[0,\tl]$ we apply Proposition~\ref{prop:kow} with $d_0:=\cl(\tl)$. We obtain
\begin{equation}
    \ul'(t)^2+\lambda^2 b(\tl)^2 \cl(\tl)^2 \ul(t)^2\leq
    \exp\left(\frac{\nu_2}{\nu_1}\right)\cdot
    \left[\ul'(0)^2+\lambda^2 b(\tl)^2 \cl(\tl)^2 \ul(0)^2\right]
    \label{est:b-c0-kow}
\end{equation}
for every $t\in[0,\tl]$.

If $\tl<T$, then we apply Proposition~\ref{prop:hyp} with $[\alpha,\beta]:=[\tl,T]$ and $d(t):=\cl(t)$. This choice is admissible because of~\eqref{est:gal-sh}, and it is compatible with Proposition~\ref{prop:mix} because $d(\tl)=\cl(\tl)=d_0$. Thanks to~\eqref{est:gal'} and~\eqref{est:gal-c}, we have
\begin{equation}
    \int_{\tl}^T |\cl'(\tau)|\,d\tau\leq
    \lambda\omega\left(\frac{1}{\lambda}\right)T,
    \nonumber
\end{equation}
and
\begin{equation}
    \sup_{t\in[\tl,T]}|\cl(t)-c(t)|\leq
    \omega\left(\frac{1}{\lambda}\right).
    \nonumber
\end{equation}
Therefore, the quantities $\Phih(\lambda,\tl,T)$ and $\Phi_0(\lambda,T)$ defined in~\eqref{defn:Phih} and~\eqref{defn:Phi0}, respectively, satisfy
\begin{equation}
    \Phih(\lambda,\tl,T)\leq\Phi_0(\lambda,T).
    \nonumber
\end{equation}
Hence, from Proposition~\ref{prop:hyp} we deduce that
\begin{eqnarray}
    \ul'(t)^2+\lambda^2 b(t)^2 \cl(t)^2\ul(t)^2 & \leq &
    \exp(\Phi_0(\lambda,T))\cdot\frac{b(t)^2}{b(\tl)^2}
    \nonumber
    \\
    & &
    \cdot\left[\ul'(\tl)^2+\lambda^2 b(\tl)^2 \cl(\tl)^2 \ul(\tl)^2\right]
    \label{est:b-c0-hyp}
\end{eqnarray}
for every $t\in[\tl,T]$.

Due to~\eqref{est:b-c0-kow} and~\eqref{est:b-c0-hyp}, the function $v(t):=\ul(t)$ satisfies the assumptions of Proposition~\ref{prop:mix} with
\[
    H_1:=\exp\left(\frac{\nu_2}{\nu_1}\right),
    \qquad
    H_2:=\exp(\Phi_0(\lambda,T)),
    \qquad
    d_0:=\cl(\tl),
    \qquad
    d(t):=\cl(t).
\]
With these choices, estimate~\eqref{th:est-kow-hyp} gives
\begin{eqnarray*}
    \ul'(t)^2+\lambda^2 \ul(t)^2 & \leq &
    3\left(\frac{\nu_2}{\nu_1}\right)^2
    \exp\left(\frac{\nu_2}{\nu_1}\right)
    \exp(\Phi_0(\lambda,T))
    \left(1+\nu_2^2 b(T)^2\right)
    \\
    & &
    \cdot
    \left[(1+\lambda^2 \tl^2)\cdot\ul'(0)^2+
    \lambda^2\ul(0)^2\right]
\end{eqnarray*}
for every $t\in[0,T]$, which is exactly~\eqref{th:C0-theory}.
\qed

\begin{rmk}[When $c$ is not continuous]
\begin{em}

In the proof we used the fact that the averaged coefficient $\cl$ is of class $C^1$, which is immediate when $c$ is continuous. In the general case one can justify the argument by a standard approximation. After extending $c$ outside
$[0,T]$ without increasing its oscillation function, we set $c_\ep:=\rho_\ep*c$, where $\rho_\ep$ is a nonnegative convolution kernel with integral equal to~$1$.

Then $c_\ep$ still satisfies the strict hyperbolicity bounds, and the oscillation of
$c_\ep$ is controlled by the same function $\omega$. Therefore the estimate proved above applies to the approximating coefficients $c_\ep$ with the same right-hand side. Passing to the limit as $\ep\to0^+$, by the continuous dependence of solutions on the coefficients, gives the result for the original coefficient $c$.

\end{em}
\end{rmk}


\subsection{Proof of Theorem~\ref{thm:c1}}

The proof follows the same general strategy as in the previous cases. We use the Kowalevskian estimate before the threshold $\tl$, and then a hyperbolic estimate after the threshold. The new feature is that, after $\tl$, the hyperbolic estimate is itself obtained by splitting the interval in a point $\ssl$ which realizes the minimum in the definition of $\Phi_1(\lambda,T)$.


Let $\tl$ be defined by~\eqref{defn:tl-nu2}, and let $\ssl$ be a minimizer in \eqref{defn:Phi1} in the definition of $\Phi_1(\lambda,T)$.

In the interval $[0,\tl]$ we apply Proposition~\ref{prop:kow} with $d_0:=c(\ssl)$. We obtain
\begin{equation}
    \ul'(t)^2+\lambda^2 b(\tl)^2 c(\ssl)^2 \ul(t)^2\leq
    \exp\left(\frac{\nu_2}{\nu_1}\right)\cdot
    \left[\ul'(0)^2+\lambda^2 b(\tl)^2 c(\ssl)^2 \ul(0)^2\right]
    \label{est:c1-kow}
\end{equation}
for every $t\in[0,\tl]$.

If $\tl<T$, we claim that
\begin{eqnarray}
    \ul'(t)^2+\lambda^2 b(t)^2 d(t)^2\ul(t)^2
    & \leq &
    \exp(\Phi_1(\lambda,T))\cdot\frac{b(t)^2}{b(\tl)^2}
    \nonumber
    \\
    & &
    \cdot
    \left[\ul'(\tl)^2+\lambda^2 b(\tl)^2 c(\ssl)^2 \ul(\tl)^2\right]
    \label{est:c1-hyp}
\end{eqnarray}
for every $t\in[\tl,T]$, where
\begin{equation}
    d(t):=
    \begin{cases}
    c(\ssl) & \text{if }t\in[\tl,\ssl],
    \\
    c(t) & \text{if }t\in[\ssl,T].
    \end{cases}
    \label{defn:d-C1}
\end{equation}
This choice is compatible with Proposition~\ref{prop:mix}, because $d(\tl)=c(\ssl)=d_0$.

Let us prove~\eqref{est:c1-hyp}. We argue separately in the two intervals $[\tl,\ssl]$ and $[\ssl,T]$.

If $\tl<\ssl$, then in the interval $[\tl,\ssl]$ we apply Proposition~\ref{prop:hyp} with
\[
    [\alpha,\beta]:=[\tl,\ssl],
    \qquad
    d(t)\equiv c(\ssl).
\]
In this interval $d'(t)\equiv0$, and for every $s\in[\tl,\ssl]$ we have
\begin{equation}
    |d(s)-c(s)|=
    |c(\ssl)-c(s)|
    \leq
    \osc(c,(0,\ssl]).
    \nonumber
\end{equation}
Therefore,
\begin{equation}
    \Phih(\lambda,\tl,\ssl)
    \leq
    \frac{2\nu_2}{\nu_1}\lambda\cdot\osc(c,(0,\ssl])
    \int_{\tl}^{\ssl}b(t)\,dt
    =:\Phi_{1,1}(\lambda).
    \nonumber
\end{equation}
From Proposition~\ref{prop:hyp} we deduce
\begin{eqnarray}
    \ul'(t)^2+\lambda^2 b(t)^2 c(\ssl)^2\ul(t)^2
    & \leq &
    \exp(\Phi_{1,1}(\lambda))\cdot\frac{b(t)^2}{b(\tl)^2}
    \nonumber
    \\
    & &
    \cdot
    \left[\ul'(\tl)^2+\lambda^2 b(\tl)^2 c(\ssl)^2 \ul(\tl)^2\right]
    \label{est:c1-hyp-ts}
\end{eqnarray}
for every $t\in[\tl,\ssl]$.

If $\ssl<T$, then in the interval $[\ssl,T]$ we apply Proposition~\ref{prop:hyp} with
\[
    [\alpha,\beta]:=[\ssl,T],
    \qquad
    d(t):=c(t).
\]
In this case $d(t)=c(t)$, and therefore, the second term in the definition of $\Phih$ vanishes. Moreover, by the derivative bound on $c$, we have
\begin{equation}
    \Phih(\lambda,\ssl,T)
    =
    \frac{2}{\nu_1}\int_{\ssl}^{T}|c'(t)|\,dt
    =:\Phi_{1,2}(\lambda).
    \nonumber
\end{equation}
From Proposition~\ref{prop:hyp} we obtain
\begin{eqnarray}
    \ul'(t)^2+\lambda^2 b(t)^2 c(t)^2\ul(t)^2
    & \leq &
    \exp(\Phi_{1,2}(\lambda))\cdot\frac{b(t)^2}{b(\ssl)^2}
    \nonumber
    \\
    & &
    \cdot
    \left[\ul'(\ssl)^2+\lambda^2 b(\ssl)^2 c(\ssl)^2 \ul(\ssl)^2\right]
    \label{est:c1-hyp-sT}
\end{eqnarray}
for every $t\in[\ssl,T]$.

If $\tl<\ssl<T$, the last factor in the right-hand side of~(\ref{est:c1-hyp-sT}) can be estimated by applying~(\ref{est:c1-hyp-ts}) with $t=\ssl$. If $\ssl=\tl$, the same conclusion follows directly from~(\ref{est:c1-hyp-sT}), because in this case the initial point of the hyperbolic estimate is already $\tl$. We obtain
\begin{eqnarray}
    \ul'(t)^2+\lambda^2 b(t)^2 c(t)^2\ul(t)^2
    & \leq &
    \exp(\Phi_{1,1}(\lambda))\cdot
    \exp(\Phi_{1,2}(\lambda))\cdot
    \frac{b(t)^2}{b(\tl)^2}
    \nonumber
    \\
    & &
    \cdot
    \left[\ul'(\tl)^2+\lambda^2 b(\tl)^2 c(\ssl)^2 \ul(\tl)^2\right]
    \label{est:c1-hyp-combined}
\end{eqnarray}
for every $t\in[\ssl,T]$.

By the definition of $\ssl$ we have
\begin{equation}
    \Phi_{1,1}(\lambda)+\Phi_{1,2}(\lambda)
    =
    \psi_{1,\lambda}(\ssl)=
    \Phi_1(\lambda,T).
    \nonumber
\end{equation}
Combining this identity with~\eqref{est:c1-hyp-ts} and~\eqref{est:c1-hyp-combined}, we obtain~\eqref{est:c1-hyp} for every $t\in[\tl,T]$.

Due to~(\ref{est:c1-kow}) and~(\ref{est:c1-hyp}), the function $v(t):=\ul(t)$ satisfies the assumptions of Proposition~\ref{prop:mix} with
\[
    H_1:=\exp\left(\frac{\nu_2}{\nu_1}\right),
    \qquad
    H_2:=\exp(\Phi_1(\lambda,T)),
    \qquad
    d_0:=c(\ssl),
\]
and $d(t)$ defined by (\ref{defn:d-C1}). With these choices, estimate~\eqref{th:est-kow-hyp} gives
\begin{eqnarray*}
    \ul'(t)^2+\lambda^2 \ul(t)^2
    & \leq &
    3\left(\frac{\nu_2}{\nu_1}\right)^2
    \exp\left(\frac{\nu_2}{\nu_1}\right)
    \exp(\Phi_1(\lambda,T))
    \left(1+\nu_2^2 b(T)^2\right)
    \\
    & &
    \cdot
    \left[(1+\lambda^2 \tl^2)\cdot\ul'(0)^2+
    \lambda^2\ul(0)^2\right]
\end{eqnarray*}
for every $t\in[0,T]$, which is exactly~\eqref{th:C1-theory}.
\qed

\subsection{Proof of Corollary~\ref{cor:natural-scale-c1}}

Since $\lambda\geq 1$, there exists $\tau_\lambda\in(0,T]$ such that
\begin{equation}
    B(\tau_\lambda)=\frac{B(T)}{\lambda}.
    \label{defn:taul}
\end{equation}

Now we distinguish two cases. If $\tau_\lambda\geq\tl$, then $s=\tau_\lambda$ is an admissible competitor in (\ref{defn:Phi1}), and therefore,
\begin{equation}
    \Phi_1(\lambda,T)\leq
    \psi_{1,\lambda}(\tau_\lambda)\leq
    \frac{2\nu_2}{\nu_1}(\nu_2-\nu_1)\lambda B(\tau_\lambda)+
    \frac{2M}{\nu_1}\log\frac{B(T)}{B(\tau_\lambda)},
    \nonumber
\end{equation}
which implies (\ref{th:cor-c1}) because of (\ref{defn:taul}).

If $\tau_\lambda<\tl$, then we use $s=\tl$ as a competitor, and we find that
\begin{equation}
    \Phi_1(\lambda,T)\leq
    \psi_{1,\lambda}(\tl)=
    \frac{2M}{\nu_1}\log\frac{B(T)}{B(\tl)}\leq
    \frac{2M}{\nu_1}\log\frac{B(T)}{B(\tau_\lambda)}=
    \frac{2M}{\nu_1}\log\lambda,
    \nonumber
\end{equation}
which implies (\ref{th:cor-c1}) also in this second case.
\qed


\subsection{Proof of Theorem~\ref{thm:c2}}

The proof follows the same scheme as the proof of Theorem~\ref{thm:c1}, but with one additional splitting point. Again we use the Kowalevskian estimate before the threshold $\tl$, and after $\tl$ we obtain the required estimate by splitting the interval in two points $\ssl$ and $\al$, where $(\ssl,\al)$ is a minimizer in \eqref{defn:Phi2} in the definition of $\Phi_2(\lambda,T)$.

In the interval $[0,\tl]$ we apply Proposition~\ref{prop:kow} with $d_0:=c(\ssl)$. We obtain
\begin{equation}
    \ul'(t)^2+\lambda^2 b(\tl)^2 c(\ssl)^2 \ul(t)^2\leq
    \exp\left(\frac{\nu_2}{\nu_1}\right)
    \left[\ul'(0)^2+\lambda^2 b(\tl)^2 c(\ssl)^2 \ul(0)^2\right]
    \label{est:c2-kow}
\end{equation}
for every $t\in[0,\tl]$.

If $\tl<T$, we define $d(t)$ as in (\ref{defn:d-C1}), and we observe that this choice is compatible with Proposition~\ref{prop:mix}, because $d(\tl)=c(\ssl)=d_0$. We claim that
\begin{eqnarray}
    \ul'(t)^2+\lambda^2 b(t)^2d(t)^2\ul(t)^2
    & \leq &
    \frac{5\nu_2}{\nu_1}
    \exp(\Phi_2(\lambda,T))\cdot
    \frac{b(t)^2}{b(\tl)^2}
    \nonumber
    \\
    & &
    \cdot
    \left[\ul'(\tl)^2+
    \lambda^2 b(\tl)^2 c(\ssl)^2 \ul(\tl)^2\right]
    \label{est:c2-after}
\end{eqnarray}
for every $t\in[\tl,T]$.

In order to prove~\eqref{est:c2-after}, we argue separately in the three intervals
\[
    [\tl,\ssl],
    \qquad\quad
    [\ssl,\al],
    \qquad\quad
    [\al,T].
\]

If $\tl<\ssl$, then in the interval $[\tl,\ssl]$ we apply Proposition~\ref{prop:hyp} with
\[
    [\alpha,\beta]:=[\tl,\ssl],
    \qquad
    d(t)\equiv c(\ssl),
\]
and as in the proof of Theorem~\ref{thm:c1} we obtain that
\begin{equation}
    \Phih(\lambda,\tl,\ssl)
    \leq
    \frac{2\nu_2}{\nu_1}
    \lambda\cdot\osc(c,(0,\ssl])
    \int_{\tl}^{\ssl}b(t)\,dt
    =:\Phi_{2,1}(\lambda),
    \nonumber
\end{equation}
and therefore,
\begin{eqnarray}
    \ul'(t)^2+\lambda^2 b(t)^2 c(\ssl)^2\ul(t)^2
    & \leq &
    \exp(\Phi_{2,1}(\lambda))\cdot
    \frac{b(t)^2}{b(\tl)^2}
    \nonumber
    \\
    & &
    \cdot
    \left[\ul'(\tl)^2+
    \lambda^2 b(\tl)^2 c(\ssl)^2 \ul(\tl)^2\right]
    \label{est:c2-ts}
\end{eqnarray}
for every $t\in[\tl,\ssl]$. Since $\Phi_{2,1}(\lambda)\leq\psi_{2,\lambda}(\ssl,\al)=\Phi_2(\lambda,T)$, this establishes (\ref{est:c2-after}) in the interval $[\tl,\ssl]$.

If $\ssl<\al$, then in the interval $[\ssl,\al]$ we apply Proposition~\ref{prop:hyp} with
\[
    [\alpha,\beta]:=[\ssl,\al],
    \qquad
    d(t):=c(t),
\]
and as in the proof of Theorem~\ref{thm:c1} we obtain that
\begin{equation}
    \Phih(\lambda,\ssl,\al)
    =
    \frac{2}{\nu_1}\int_{\ssl}^{\al}|c'(t)|\,dt
    =:\Phi_{2,2}(\lambda),
    \nonumber
\end{equation}
and therefore,
\begin{eqnarray}
    \ul'(t)^2+\lambda^2 b(t)^2 c(t)^2\ul(t)^2
    & \leq &
    \exp(\Phi_{2,2}(\lambda))\cdot
    \frac{b(t)^2}{b(\ssl)^2}
    \nonumber
    \\
    & &
    \cdot
    \left[\ul'(\ssl)^2+
    \lambda^2 b(\ssl)^2 c(\ssl)^2 \ul(\ssl)^2\right]
    \label{est:c2-sa-local}
\end{eqnarray}
for every $t\in[\ssl,\al]$.
Estimating the last factor in the right-hand side of~\eqref{est:c2-sa-local} by means of~\eqref{est:c2-ts} with $t=\ssl$, we obtain
\begin{eqnarray}
    \ul'(t)^2+\lambda^2 b(t)^2 c(t)^2\ul(t)^2
    & \leq &
    \exp(\Phi_{2,1}(\lambda))\cdot
    \exp(\Phi_{2,2}(\lambda))\cdot
    \frac{b(t)^2}{b(\tl)^2}
    \nonumber
    \\
    & &
    \cdot
    \left[\ul'(\tl)^2+
    \lambda^2 b(\tl)^2 c(\ssl)^2 \ul(\tl)^2\right]
    \label{est:c2-sa}
\end{eqnarray}
for every $t\in[\ssl,\al]$. Since $\Phi_{2,1}(\lambda)+\Phi_{2,2}(\lambda)\leq\psi_{2,\lambda}(\ssl,\al)=\Phi_2(\lambda,T)$, this establishes (\ref{est:c2-after}) in the interval $[\ssl,\al]$.

Finally, if $\al<T$, then in the interval $[\al,T]$ we apply Proposition~\ref{prop:tarama} with
\[
    [\alpha,\beta]:=[\al,T].
\]
This is admissible because $\al\geq A_\lambda$, with $A_\lambda$ defined in~\eqref{defn:Al}, and therefore, condition~\eqref{hp:tar-c'} is satisfied in $[\al,T]$. From Proposition~\ref{prop:tarama} we deduce
\begin{eqnarray}
    \ul'(t)^2+\lambda^2 b(t)^2 c(t)^2\ul(t)^2
    & \leq &
    5\frac{c(t)}{c(\al)}\cdot
    \exp(\Phi_{2,3}(\lambda))\cdot
    \frac{b(t)^2}{b(\al)^2}
    \nonumber
    \\
    & &
    \cdot
    \left[\ul'(\al)^2+
    \lambda^2 b(\al)^2 c(\al)^2 \ul(\al)^2\right]
    \label{est:c2-aT-local}
\end{eqnarray}
for every $t\in[\al,T]$, where
\begin{equation}
    \Phi_{2,3}(\lambda):=
    \frac{3}{\lambda}
    \int_{\al}^{T}
    \frac{1}{b(t)}
    \left(\frac{|c''(t)|}{c(t)^2}+\frac{c'(t)^2}{c(t)^3}\right)dt.
    \nonumber
\end{equation}

Estimating the last factor in the right-hand side of~\eqref{est:c2-aT-local} by means of~\eqref{est:c2-sa} with $t=\al$, from (\ref{est:c2-aT-local}) we deduce that
\begin{eqnarray}
    \ul'(t)^2+\lambda^2 b(t)^2 c(t)^2\ul(t)^2
    & \leq &
    \frac{5\nu_2}{\nu_1}
    \exp(\Phi_{2,1}(\lambda))\cdot
    \exp(\Phi_{2,2}(\lambda))\cdot
    \exp(\Phi_{2,3}(\lambda))\cdot
    \frac{b(t)^2}{b(\tl)^2}
    \nonumber
    \\
    & &
    \cdot
    \left[\ul'(\tl)^2+
    \lambda^2 b(\tl)^2 c(\ssl)^2 \ul(\tl)^2\right]
    \label{est:c2-aT}
\end{eqnarray}
for every $t\in[\al,T]$.

By the definition of $(\ssl,\al)$, and by the estimates above, we have
\begin{equation}
    \Phi_{2,1}(\lambda)+\Phi_{2,2}(\lambda)+\Phi_{2,3}(\lambda)=
    \psi_{2,\lambda}(\ssl,\al)
    =
    \Phi_2(\lambda,T).
    \nonumber
\end{equation}
Combining this inequality with~\eqref{est:c2-ts}, \eqref{est:c2-sa}, and~\eqref{est:c2-aT}, we obtain~\eqref{est:c2-after} for every $t\in[\tl,T]$.

Due to~\eqref{est:c2-kow} and~\eqref{est:c2-after}, the function $v(t):=\ul(t)$ satisfies the assumptions of Proposition~\ref{prop:mix} with
\[
    H_1:=\exp\left(\frac{\nu_2}{\nu_1}\right),
    \qquad
    H_2:=
    \frac{5\nu_2}{\nu_1}
    \exp(\Phi_2(\lambda,T)).
\]
With these choices, estimate~\eqref{th:est-kow-hyp} gives
\begin{eqnarray*}
    \ul'(t)^2+\lambda^2\ul(t)^2
    & \leq &
    15\left(\frac{\nu_2}{\nu_1}\right)^3
    \exp\left(\frac{\nu_2}{\nu_1}\right)\cdot
    \exp(\Phi_2(\lambda,T))\cdot
    \left(1+\nu_2^2b(T)^2\right)
    \\
    & &
    \cdot
    \left[(1+\lambda^2\tl^2)\ul'(0)^2+
    \lambda^2\ul(0)^2\right]
\end{eqnarray*}
for every $t\in[0,T]$, which is exactly~\eqref{th:C2-theory}.
\qed


\subsection{Proof of Corollary~\ref{cor:natural-scale-c2}}

We divide the proof into several steps.

\paragraph{\textmd{\textit{The auxiliary splitting point $\al$}}}

We define
\begin{equation}
    \al:=
    \inf\left\{t\in[\tl,T]:
    \frac{\nu_1^2\lambda}{M}\geq
    \frac{|\log B(t)|}{B(t)}
    \right\}.
    \label{defn:al-B}
\end{equation}
This definition is meaningful under the first condition in~(\ref{hp:lambda-c2}), because that condition implies that the set in~(\ref{defn:al-B}) is nonempty.

We claim that
\begin{equation}
    A_\lambda\leq \al.
    \label{est:Al<=al}
\end{equation}
Indeed, since $B(t)\leq1$ for every $t\in(0,T]$, and the function $\sigma\mapsto|\log\sigma|/\sigma$ is decreasing in $(0,1]$, we have
\[
    \frac{|\log B(t)|}{B(t)}
    \leq
    \frac{\nu_1^2\lambda}{M}
    \qquad
    \forall t\in[\al,T].
\]
Therefore, by the assumption on $c$ and $c'$, it follows that
\[
    |c'(t)|
    \leq
    M\frac{b(t)}{B(t)}|\log B(t)|
    \leq
    \nu_1^2\lambda b(t)
    \leq
    \lambda b(t)c(t)^2
    \qquad
    \forall t\in[\al,T],
\]
and this proves~(\ref{est:Al<=al}).

\paragraph{\textmd{\textit{Upper bound for $B(\tl)$}}}

We claim that
\begin{equation}
    B(\tl)\leq\frac{1}{\nu_2\lambda}.
    \label{est:Btl}
\end{equation}
Indeed, by the monotonicity of $b$ we have
\begin{equation}
    B(t)\leq tb(t)
    \qquad
    \forall t\in[0,T].
    \nonumber
\end{equation}
Therefore, by the definition of $\tl$,
\begin{equation}
    B(t)\leq tb(t)<\frac{1}{\nu_2\lambda}
    \qquad
    \forall t\in[0,\tl),
    \nonumber
\end{equation}
and passing to the limit as $t\to\tl^-$ gives~(\ref{est:Btl}).

\paragraph{\textmd{\textit{Strict bounds for $\al$}}}

We now show that, under the conditions in~(\ref{hp:lambda-c2}), one has
\begin{equation}
    \tl<\al<T
    \nonumber
\end{equation}
and
\begin{equation}
    \frac{|\log B(\al)|}{B(\al)}
    =
    \frac{\nu_1^2\lambda}{M}.
    \label{eqn:Bal}
\end{equation}

The fact that $\al<T$ follows from the strict inequality in the first condition in~(\ref{hp:lambda-c2}), and the continuity of $B$. In order to prove that $\tl<\al$, we argue by contradiction and assume that $\al=\tl$. Since the defining set in~(\ref{defn:al-B}) is closed, this gives
\begin{equation}
    \frac{\nu_1^2\lambda}{M}\geq
    \frac{|\log B(\tl)|}{B(\tl)}.
    \label{est:log-Btl}
\end{equation}
On the other hand, the second condition in~(\ref{hp:lambda-c2}) implies in particular that $\nu_2\lambda>1$. Hence, (\ref{est:Btl}) gives
\[
    B(\tl)\leq\frac{1}{\nu_2\lambda}<1.
\]
Since $\sigma\mapsto |\log\sigma|/\sigma$ is decreasing in $(0,1]$, it follows that
\begin{equation}
    \frac{|\log B(\tl)|}{B(\tl)}
    \geq
    \nu_2\lambda\log(\nu_2\lambda).
    \nonumber
\end{equation}
Combined with~(\ref{est:log-Btl}), this implies
\[
    \frac{\nu_1^2}{M}\geq
    \nu_2\log(\nu_2\lambda),
\]
which contradicts the second condition in~(\ref{hp:lambda-c2}). This proves that $\tl<\al$.

Since $\al$ lies in the open interval $(\tl,T)$, identity~(\ref{eqn:Bal}) follows from the definition of $\al$ and the continuity of $B$.

\paragraph{\textmd{\textit{Bounds for $B(\al)$}}}

We prove that
\begin{equation}
    \frac{1}{\nu_2\lambda}\leq B(\al)\leq 1.
    \label{est:Bal}
\end{equation}
The upper bound is part of the assumptions of the corollary, because $B(\al)\leq B(T)\leq 1$. Let us prove the lower bound. If
\[
    B(\al)<\frac{1}{\nu_2\lambda},
\]
then, using again the monotonicity of $\sigma\mapsto|\log\sigma|/\sigma$ in $(0,1]$, we would obtain
\[
    \frac{\nu_1^2\lambda}{M}
    =
    \frac{|\log B(\al)|}{B(\al)}
    >
    \nu_2\lambda\log(\nu_2\lambda),
\]
which contradicts the second condition in~(\ref{hp:lambda-c2}). This proves~(\ref{est:Bal}).

As a consequence of~(\ref{est:Bal}), since $\nu_2\lambda>1$, we also have
\begin{equation}
    |\log B(\al)|\leq\log(\nu_2\lambda).
    \label{est:Bal-log}
\end{equation}

\paragraph{\textmd{\textit{Conclusion}}}

The pair $(\al,\al)$ is an admissible competitor in the minimization problem~(\ref{defn:Phi2}), because of~(\ref{est:Al<=al}). Therefore,
\begin{eqnarray}
    \Phi_2(\lambda,T)
    & \leq &
    \psi_{2,\lambda}(\al,\al)
    \nonumber
    \\
    & \leq &
    \frac{2\nu_2}{\nu_1}(\nu_2-\nu_1)
    \lambda B(\al)
    \nonumber
    \\
    & &
    \mbox{}+
    \frac{3M^2(\nu_1+1)}{\nu_1^3\lambda}
    \int_{\al}^{T}
    b(t)\frac{\log^2 B(t)}{B(t)^2}\,dt.
    \label{est:Phi2-comp}
\end{eqnarray}
Indeed, in the first term in the definition of $\psi_{2,\lambda}(\al,\al)$ we used
\[
    \osc(c,(0,\al))\leq \nu_2-\nu_1
    \qquad\quad\text{and}\quad\qquad
    \int_{\tl}^{\al}b(t)\,dt\leq B(\al),
\]
while in the third term we used the assumptions on $c'$ and $c''$, together with $c(t)\geq\nu_1$, to get
\[
    \frac{|c''(t)|}{c(t)^2}+\frac{c'(t)^2}{c(t)^3}
    \leq
    M^2\cdot\frac{\nu_1+1}{\nu_1^3}\cdot
    \frac{b(t)^2}{B(t)^2}\log^2 B(t).
\]

Let us estimate the two terms in the right-hand side of~(\ref{est:Phi2-comp}). From~(\ref{eqn:Bal}) and~(\ref{est:Bal-log}) we obtain
\begin{equation}
    \lambda B(\al)
    =
    \frac{M}{\nu_1^2}
    |\log B(\al)|
    \leq
    \frac{M}{\nu_1^2}\log(\nu_2\lambda).
    \label{est:first-term-c2}
\end{equation}

As for the second term, the change of variable $x=B(t)$ gives
\begin{equation}
    \int_{\al}^{T}
    b(t)\frac{\log^2 B(t)}{B(t)^2}\,dt
    =
    \int_{B(\al)}^{B(T)}
    \frac{\log^2 x}{x^2}\,dx
    \leq
    \int_{B(\al)}^{+\infty}
    \frac{\log^2 x}{x^2}\,dx.
    \nonumber
\end{equation}
Since
\begin{equation}
    \int_{\alpha}^{+\infty}\frac{\log^2 x}{x^2}\,dx
    =
    \frac{\log^2\alpha+2\log\alpha+2}{\alpha}
    \leq
    \frac{2\log^2\alpha+3}{\alpha}
    \qquad
    \forall \alpha>0,
    \nonumber
\end{equation}
we deduce that
\begin{equation}
    \int_{\al}^{T}
    b(t)\frac{\log^2 B(t)}{B(t)^2}\,dt
    \leq
    \frac{2\log^2 B(\al)+3}{B(\al)}.
    \nonumber
\end{equation}
Using~(\ref{eqn:Bal}), (\ref{est:Bal}), and~(\ref{est:Bal-log}), we conclude that
\begin{eqnarray}
    \frac{1}{\lambda}
    \int_{\al}^{T}
    b(t)\frac{\log^2 B(t)}{B(t)^2}\,dt
    & \leq &
    \frac{1}{\lambda}
    \left(
    \frac{2|\log B(\al)|^2}{B(\al)}
    +
    \frac{3}{B(\al)}
    \right)
    \nonumber
    \\[0.5ex]
    & = &
    \frac{1}{\lambda}
    \left(
    \frac{2\nu_1^2\lambda}{M}
    |\log B(\al)|
    +
    \frac{3}{B(\al)}
    \right)
    \nonumber
    \\[0.5ex]
    & \leq &
    \frac{2\nu_1^2}{M}\log(\nu_2\lambda)
    +
    3\nu_2.
    \label{est:second-term-c2}
\end{eqnarray}

Plugging~(\ref{est:first-term-c2}) and~(\ref{est:second-term-c2}) into~(\ref{est:Phi2-comp}), we obtain
\begin{eqnarray*}
    \Phi_2(\lambda,T)
    & \leq &
    \frac{2M\nu_2(\nu_2-\nu_1)}{\nu_1^3}
    \log(\nu_2\lambda)
    \nonumber
    \\
    & &
    \mbox{}+
    \frac{6M(\nu_1+1)}{\nu_1}
    \log(\nu_2\lambda)
    +
    \frac{9\nu_2M^2(\nu_1+1)}{\nu_1^3},
\end{eqnarray*}
which is the desired estimate.
\qed


\subsubsection*{\centering Acknowledgments}

The first two authors are members of the Italian {\selectlanguage{italian}%
``Gruppo Nazionale per l'Analisi Matematica, la Probabilit\`{a} e le loro Applicazioni'' (GNAMPA) of the ``Istituto Nazionale di Alta Matematica'' (INdAM)}, and acknowledge the MIUR Excellence Department Project awarded to the Department of Mathematics of the University of Pisa (CUP I57G22000700001).


\begin{thebibliography}{10}
\providecommand{\url}[1]{\texttt{#1}}
\providecommand{\urlprefix}{URL }
\providecommand{\selectlanguage}[1]{\relax}
\providecommand{\eprint}[2][]{\url{#2}}

\bibitem{1983-AlexandrianYagdjian}
\textsc{G.~R. Alexandrian}, \textsc{K.~H. Yagdjian}.
\newblock On the correctness of the {C}auchy problem and propagation of the
  singularities for the differential equations with real multiple
  characteristics.
\newblock \emph{Soviet J. Contemporary Math. Anal.} \textbf{18} (1983), no.~2,
  53--68.

\bibitem{2008-CicognaniHirosawaReissig}
\textsc{M.~Cicognani}, \textsc{F.~Hirosawa}, \textsc{M.~Reissig}.
\newblock Loss of regularity for {$p$}-evolution type models.
\newblock \emph{J. Math. Anal. Appl.} \textbf{347} (2008), no.~1, 35--58.

\bibitem{1979-DGCS}
\textsc{F.~Colombini}, \textsc{E.~De~Giorgi}, \textsc{S.~Spagnolo}.
\newblock Sur les \'{e}quations hyperboliques avec des coefficients qui ne
  d\'{e}pendent que du temps.
\newblock \emph{Ann. Scuola Norm. Sup. Pisa Cl. Sci. (4)} \textbf{6} (1979),
  no.~3, 511--559.

\bibitem{2002-SNS-ColDSaKin}
\textsc{F.~Colombini}, \textsc{D.~Del~Santo}, \textsc{T.~Kinoshita}.
\newblock Well-posedness of the {C}auchy problem for a hyperbolic equation with
  non-{L}ipschitz coefficients.
\newblock \emph{Ann. Sc. Norm. Super. Pisa Cl. Sci. (5)} \textbf{1} (2002),
  no.~2, 327--358.

\bibitem{2003-BSM-ColDSaRei}
\textsc{F.~Colombini}, \textsc{D.~Del~Santo}, \textsc{M.~Reissig}.
\newblock On the optimal regularity of coefficients in hyperbolic {C}auchy
  problems.
\newblock \emph{Bull. Sci. Math.} \textbf{127} (2003), no.~4, 328--347.

\bibitem{1983-CJS}
\textsc{F.~Colombini}, \textsc{E.~Jannelli}, \textsc{S.~Spagnolo}.
\newblock Well-posedness in the {Gevrey} classes of the {Cauchy} problem for a
  non-strictly hyperbolic equation with coefficients depending on time.
\newblock \emph{Ann. Scuola Norm. Sup. Pisa Cl. Sci.} \textbf{10} (1983),
  no.~2, 291--312.

\bibitem{2007-DelSantoKinoshitaReissig}
\textsc{D.~Del~Santo}, \textsc{T.~Kinoshita}, \textsc{M.~Reissig}.
\newblock Energy estimates for strictly hyperbolic equations with low
  regularity in coefficients.
\newblock \emph{Differential Integral Equations} \textbf{20} (2007), no.~8,
  879--900.

\bibitem{gg:CDSR-optimal}
\textsc{M.~Ghisi}, \textsc{M.~Gobbino}.
\newblock Finite vs infinite derivative loss for abstract wave equations with
  singular time-dependent propagation speed.
\newblock \emph{Bull. Sci. Math.} \textbf{166} (2021), 102918, 41.

\bibitem{gg:OptDerLoss}
\textsc{M.~Ghisi}, \textsc{M.~Gobbino}.
\newblock Optimal derivative loss for abstract wave equations.
\newblock \emph{Math. Ann.} \textbf{386} (2023), 455--494.

\bibitem{gg:residual}
\textsc{M.~Ghisi}, \textsc{M.~Gobbino}.
\newblock Three examples of residual pathologies.
\newblock \emph{Jpn. J. Math.} \textbf{18} (2023), no.~1, 49--113.

\bibitem{gg:summa-theologica}
\textsc{M.~Ghisi}, \textsc{M.~Gobbino}.
\newblock Fifty years of wave equations with time-dependent propagation speeds:
  A variational perspective and new frontiers, 2026.
\newblock \urlprefix\url{https://arxiv.org/abs/2608.24286}.
\newblock \eprint{2608.24286}.

\bibitem{2003-Hirosawa}
\textsc{F.~Hirosawa}.
\newblock On the {C}auchy problem for second order strictly hyperbolic
  equations with non-regular coefficients.
\newblock \emph{Math. Nachr.} \textbf{256} (2003), 29--47.

\bibitem{2004-HirosawaReissig}
\textsc{F.~Hirosawa}, \textsc{M.~Reissig}.
\newblock Well-posedness in {S}obolev spaces for second-order strictly
  hyperbolic equations with nondifferentiable oscillating coefficients.
\newblock \emph{Ann. Global Anal. Geom.} \textbf{25} (2004), no.~2, 99--119.

\bibitem{1974-IvrijPetkov}
\textsc{V.~Y. Ivrii}, \textsc{V.~M. Petkov}.
\newblock Necessary conditions for the correctness of the {C}auchy problem for
  non-strictly hyperbolic equations.
\newblock \emph{Uspekhi Mat. Nauk} \textbf{29} (1974), no. 5(179), 3--70.

\bibitem{2005-KinoshitaReissig}
\textsc{T.~Kinoshita}, \textsc{M.~Reissig}.
\newblock About the loss of derivatives for strictly hyperbolic equations with
  non-{L}ipschitz coefficients.
\newblock \emph{Adv. Differential Equations} \textbf{10} (2005), no.~2,
  191--222.

\bibitem{1970-Oleijnik}
\textsc{O.~A. Oleinik}.
\newblock On the {C}auchy problem for weakly hyperbolic equations.
\newblock \emph{Comm. Pure Appl. Math.} \textbf{23} (1970), no.~4, 569--586.

\bibitem{1980-TaniguchiTozaki}
\textsc{K.~Taniguchi}, \textsc{Y.~Tozaki}.
\newblock A hyperbolic equation with double characteristics which has a
  solution with branching singularities.
\newblock \emph{Math. Japon.} \textbf{25} (1980), no.~3, 279--300.

\bibitem{1995-Tarama}
\textsc{S.~Tarama}.
\newblock On the second order hyperbolic equations degenerating in the infinite
  order. {Example}.
\newblock \emph{Math. Japon.} \textbf{42} (1995), no.~3, 523--533.

\bibitem{2007-EJDE-Tarama}
\textsc{S.~Tarama}.
\newblock Energy estimate for wave equations with coefficients in some {B}esov
  type class.
\newblock \emph{Electron. J. Differential Equations}  (2007), No. 85, 12.

\bibitem{1980-Yagdjian}
\textsc{K.~A. Yagdjian}.
\newblock On correctness of the {C}auchy problem for non-strictly hyperbolic
  equations.
\newblock \emph{Soviet J. Contemporary Math. Anal.} \textbf{15} (1980), no.~6,
  54--65.

\bibitem{1997-Yagdjian}
\textsc{K.~A. Yagdjian}.
\newblock \emph{The {C}auchy Problem for Hyperbolic Operators: Multiple
  Characteristics. Micro-Local Approach}, \emph{Mathematical Topics},
  volume~12.
\newblock Akademie-Verlag, Berlin, 1997.

\bibitem{Yamazaki1990}
\textsc{T.~Yamazaki}.
\newblock Second order hyperbolic equations with time-dependent singularity or
  degeneracy.
\newblock \emph{Hokkaido Math. J.} \textbf{19} (1990), no.~1, 125--149.

\end{thebibliography}

\label{NumeroPagine}

\end{document}